
\documentclass[10pt,conference]{IEEEtran}

\pagenumbering{gobble} 

\makeatother
\usepackage{graphicx}
\usepackage{tikz}
\usepackage{color}
\usepackage{amsmath, amsfonts, amssymb, mathrsfs}
\usepackage[amsmath,thmmarks]{ntheorem}
\usepackage{subfigure}
\usepackage{epstopdf}

\usepackage{pgfplots}

\usepackage{arydshln}
\usepackage{mathabx}
\usepackage{cite}
\usepackage[super]{nth}
\usepackage{cite}

\usepackage{algorithm}
\usepackage{algorithmic}
\usepackage{bbm}
\usepackage{bm}

\usetikzlibrary{calc}
\usetikzlibrary{arrows}
\usetikzlibrary{patterns}
\usetikzlibrary{shapes}
%

\makeatletter

\makeatother

\theorempreskipamount0.618ex
\theorempostskipamount0.618ex

\theoremstyle{plain}
\theoremheaderfont{\itshape\bfseries}
\theorembodyfont{\normalfont\itshape}
\theoremseparator{:}
\theoremsymbol{}
\newtheorem{theorem}{Theorem}
\newtheorem{lemma}[theorem]{Lemma}
\newtheorem{corollary}[theorem]{Corollary}
\newtheorem{proposition}[theorem]{Proposition}

\newtheorem{definition}{Definition}

\theoremstyle{nonumberplain}

\theoremstyle{plain}
\theoremheaderfont{\itshape\bfseries}
\theorembodyfont{\normalfont}
\theoremseparator{:}
\theoremsymbol{}
\newtheorem{remark}{Remark}

\theoremstyle{plain}
\theoremheaderfont{\itshape\bfseries}
\theorembodyfont{\normalfont}
\theoremseparator{:}
\newtheorem{example}{Example}
\newtheorem{assum}{Assumption}

\theoremstyle{nonumberplain}
\theoremsymbol{\rule{1ex}{1ex}}

\newtheorem{proof}{Proof}
\newlength\fheight
\newlength\fwidth
\setlength\fheight{4cm}
\setlength\fwidth{6cm}



\newcommand{\abs}[1]{\left| #1 \right|}
\newcommand{\norm}[1]{\| #1 \|}
\newcommand{\inn}[2]{\langle #1,#2 \rangle}
\newcommand{\lrbrace}[1]{\left\{ #1 \right\}}

\newcommand{\normi}[1]{{\left\vert\kern-0.25ex\left\vert\kern-0.25ex\left\vert #1 
		\right\vert\kern-0.25ex\right\vert\kern-0.25ex\right\vert}}
\newcommand{\inni}[2]{{\langle\kern-0.25ex\langle #1,#2
		\rangle\kern-0.25ex\rangle}}

\def\X{\mathcal{X}}

\def\d{ \mathrm{d} }								
\def\T{ \mathrm{T} }								

\def\nat{ \mathbb{N} }								


\def\real{ \mathbb{R} }								

\def\Erw{ \mathbb{E} }

\def\Vari{ \mathbb{V} }
\def\Prob{ \mathbb{P} }

\def\D{ \mathcal{D} }								



\def\V{\mathcal{V}}

\def\X{\mathcal{X}}



\def\bx{ \mathbf{x} }									
\def\by{ \mathbf{y} }									
	
 
 
 







\DeclareMathOperator{\Gap}{\textnormal{Gap}}

\DeclareMathOperator{\Reg}{\textnormal{Reg}}

\DeclareMathOperator{\hCFit}{\textnormal{$h$-CFit}}
\newcommand\restr[2]{{
  \left.\kern-\nulldelimiterspace 
  #1 
  \right|_{#2} 
  }}

\DeclareMathOperator*{\argmax}{arg\,max}
\DeclareMathOperator*{\argmin}{arg\,min}









\def\ent{\text{ent}}

\def\Z{\mathcal{Z}}


\title{Robust Online Learning for Resource Allocation - Beyond Euclidean Projection and Dynamic Fit}
\author{\IEEEauthorblockN{Ezra Tampubolon and Holger Boche}
\IEEEauthorblockA{
  Lehrstuhl f{\"u}r Theoretische Informationstechnik\\
  Technische Universit{\"a}t M{\"u}nchen, 80290 M{\"u}nchen, Germany\\
  \{ezra.tampubolon,boche\}@tum.de}

 }



\begin{document}
%
\maketitle
%
\begin{abstract}
Online-learning literature has focused on designing algorithms that ensure sub-linear growth of the cumulative long-term constraint violations. The drawback of this guarantee is that strictly feasible actions may cancel out constraint violations on other time slots. For this reason, we introduce a new performance measure called $\hCFit$, whose particular instance is the cumulative positive part of the constraint violations. We propose a class of non-causal algorithms for online-decision making, which guarantees, in slowly changing environments, sub-linear growth of this quantity despite noisy first-order feedback. Furthermore, we demonstrate by numerical experiments the performance gain of our method relative to state of art. 
\end{abstract}
\begin{IEEEkeywords}
Fog Computing, Internet-of-Things, Online Learning
\end{IEEEkeywords}

\section{Introduction}
\label{sec:intro}
The online learning \cite{Zinkevich2003} is an emerging paradigm aiming to solve the problem of sequential decision making in an unknown and possibly adversarial environment: Consider a time horizon $T\in\nat$. At each time slot $t\in [T]$, the decision-maker chooses an action $X_{t}$ from a compact convex set, and simultaneously the environment reveals a convex cost function $f_{t}$. This procedure results in the decision-maker suffering the loss $f_{t}(X_{t})$ resulting from applying the action $X_{t}$.

 The online learning method is ideally suited for application where the underlying problem is subject to unpredictable dynamic, such as dispatch of renewable energy having intermittent and unpredictable nature, or network applications where the task to accomplish is subject to unpredictable human participation, or applications requiring flexibility in handling heterogenity and scalability. Furthermore, applications requiring real-time decision leverage from an online learning method since the given algorithm is usually lightweight. For those reasons, online learning has become in the recent years a popular method to solve several resource allocation and management problems in several engineering fields such as economic dispatch in power systems \cite{Narayanaswamy2012,Moeini2014}, data center scheduling \cite{Lin2011,Lin2012,Chen2017}, electric vehicle charging \cite{Gan2013,Kim2014}, video streaming \cite{Joseph2012}, thermal control \cite{Zanini2010}, and fog computing in IoT \cite{Chen2018,Chen2019,Chen20192}. 
 
Classical OL deal with problems with time-invariants constraints that has to be strictly satisfied. Therefore, a projection operator is typically applied to the update. However, in practical applications (see e.g. \cite{Chen2017,Chen2018,Chen2019,Chen20192}) one usually encounter additional time-variant constraints. Moreover, the need for decentralization of the learner action in applications can not be satisfied by simply using a centralized projection operator. For those reasons, several works \cite{Mahdavi1,Yuan2018,Yu2016,Chen2017,Chen2018} propose projected primal-dual methods which ensure sub-linear growth of the regret, i.e., the cumulative distance of the generated action to the optimal ones, and the long-term fulfillment of the constraints in the sense that the sum of the constraint violations grow sub-linearly. A problem relates to this long-term guarantee is that it holds, despite substantial instantaneous constraint violations, as long as the methods generate strictly feasible actions canceling the latter.

\paragraph*{Our Contribution}
In this work, we introduce a new long-term constraint preservation performance measure called $\hCFit$. The feature of this performance measure is that $\hCFit$ avoids cancellation effects between the summands, which might occur in the simple cumulative constraint violation measure. We design a non-causal saddle-point method based on mirror descent aiming to ensure dynamic regret optimization and sub-linear growth of $\hCFit$. In particular, we can guarantee dynamic regret bound of order $\mathcal{O}(\mathbb{V}_{T}T^{1/2})$, where $\mathbb{V}_{T}$ measures the variation of the optimizers of the underlying time-varying problem, and $\hCFit$-bound of order $\mathcal{O}(T^{3/4})$. We show by numerical experiments the performance gain of our method relative to state of the art and the advantage of using a mirror map other than Euclidean projection.
\paragraph*{Relation to Prior Works}  
\begin{table*}
	\caption{An Overview of Related Works on Online Convex Optimization}
	\centering 
	\begin{tabular}{c c c c c c}
		\hline\hline                        
		\\ [0.2ex]
		References & Long-Term Constraint Type & Feedback Noise & Regret Bound & Regret Benchmark Type & Constraint Violation Bound \\ [0.5ex]
		\hline \hline                 
		\cite{Zinkevich2003} & No & No & $\mathcal{O}(T^{1/2})$  & Static and Dynamic & - \\
		\cite{Mahdavi1,Jenatton2016,Yu2016} & $\sum_{t=1}^{T}g(x_{t})$ & No & $\mathcal{O}(T^{1/2})$ & Static & $\mathcal{O}(T^{3/4})$ resp. $\mathcal{O}(T^{1/2})$ \\
		\cite{Yu2017} & $\sum_{t=1}^{T}g_{t}(x_{t})$    & No  & $\mathcal{O}(T^{1/2})$ & Static  & $\mathcal{O}(T^{1/2})$\\
		\cite{Yuan2018} & $\sum_{t=1}^{T}[g(x_{t})]^{2}_{+}$ & No & $\mathcal{O}(T^{1/2})$  & Static  & $\mathcal{O}(T^{1/2})$\\
		This paper & $\sum_{t=1}^{T}h(g_{t}(X_{t}))$, & Martingale& $\mathcal{O}((1+\sigma^{2}+\mathbb{V}_{T})T^{1/2})$ & Dynamic & $\mathcal{O}(T^{3/4})$  \\ [1ex]      
		\hline\hline
	\end{tabular}

\rule{0pt}{1ex}    

	\begin{tabular}{c c}
	\hline\hline                        
	References & Comments \\ [1ex]
	\hline \hline                 
	\cite{Zinkevich2003} & Mirror Map \\
	\cite{Mahdavi1,Jenatton2016} & Mirror Map  \\
	\cite{Yu2016,Yu2017} & Requires Slater condition, Causal dual update, Euclidean projection \\
	\cite{Yuan2018} &  \\
	This paper & Mirror Map \\ [1ex]      
	\hline\hline
\end{tabular}
\label{table:nonlin}
\end{table*}

	 In the absence of long-term constraints, \cite{Zinkevich2003} showed that the standard method of online mirror descent achieves $\mathcal{O}(\sqrt{T})$ regret bound (see also \cite{Shalev-Shwartz2012}), which is known to be optimal \cite{Abernethy2008}. However, their notion of regret, i.e., static regret, corresponds to the difference of the losses between the online solution and the overall best static solution in hindsight, which is weaker than ours. 
	 
	 The first work tackling the online problem with long-term constraints is \cite{Mahdavi1}. This work considers time-invariant constraint function and proposed an algorithm having $\mathcal{O}(\sqrt{T})$ regret bound, and $\mathcal{O}(T^{3/4})$ cumulative constraint violation bound. As investigated by \cite{Jenatton2016}, one can efficiently trade-off between those bound by allowing the step-size to be variable. By utilizing Slater's condition, and allowing the dual update to depend causally on the primal update, \cite{Yu2016} provides an improved $\mathcal{O}(\sqrt{T})$ bound for the cumulative constraint violation. \cite{Yuan2018} was able to provide $\mathcal{O}(\sqrt{T})$ bound for tighter long-term constraint preservation measure $\sum_{t=1}^{T}[g(X_{t})]_{+}^{2}$. However, this remarkable guarantee is achieved by allowing the dual update to utilize causal information about the primal variable.  
	 
	 To the best of our knowledge, the first work considering the online problem with time-varying constraints is \cite{Yu2017}. Based on \cite{Yu2016}, they provide a (causal) primal-dual algorithm ensuring that the static regret is of order $\mathcal{O}(\sqrt{T})$, and the cumulative constraint violation of order $\mathcal{O}(\sqrt{T})$. 
	
Until now, we only discuss works delivering static regret guarantees. The work \cite{Chen2017} proposed a projected gradient descent based algorithm for the online problem with time-varying constraints aiming to optimize the regret against the dynamic comparator. Their result relies on the assumption that two consecutive constraint functions are bounded by the slack achieved by a fixed primal action uniformly over all constraint functions. Surely both, the existence of the slack and the action, and the boundedness of the difference consecutive constraint functions are difficult to guarantee. Despite this fact, their dynamic regret bound is worse than ours ($\mathcal{O}(\mathbb{V}_{T}T^{1/2})$) since it is lower bounded by $\mathcal{O}(\mathbb{V}_{T}T^{1/3})$. The cumulative constraint violations guarantee given \cite{Chen2017} is of order $\mathcal{O}(T^{2/3})$, which is better than ours ($\mathcal{O}(T^{3/4})$). However, our performance measure to this respect, i.e., $\hCFit$, is stronger than that given in \cite{Chen2017}. A clear plus-point of \cite{Chen2017} is the application of the proposed online algorithm to proactive network allocation. \cite{Chen2018} proposed a novel adaptive algorithm for the online problem with time-varying constraints with interesting applications to the problem of computational offloading in IoT. The corresponding method possesses higher computational complexity than ours since it requires the covariance of the gradient, root, and inverse operation of a matrix. Despite of this fact, their dynamic regret guarantee ($\mathcal{O}(T^{7/8}\mathbb{V}_{T}$) and long-term constraint preservation guarantee $(\mathcal{O}(\max\lrbrace{T^{15/16},T^{7/8}\sqrt{\mathbb{V}_{T}}}))$ is worse than ours.

\paragraph*{Basic Notions and Notations}
For a real vector $a$, $[a]_{+}$ denotes the vector whose entries are the non-negative part of the entries of $a$.
The canonical projection onto a closed convex subset $A$ of an Euclidean space $\real^{D}$ is denoted by $\Pi_{A}$, i.e.: 
\begin{equation*}
\Pi_{A}(\by):=\argmin_{\bx\in A}\norm{\by-\bx}_{2}.
\end{equation*}
For a subspace $A$ of an Euclidean space $\real^{D}$, we denote the diameter of $A$ by:
\begin{equation*}
D_{A}:=\sup_{\bx,\by\in A}\norm{\bx-\by}_{2}
\end{equation*}
Let $(\X,\norm{\cdot})$ be a normed space and $A,B\subseteq \X$. We denote $A-B:=\lrbrace{x-y:~x\in A,~y\in B}$, and $\norm{A}:=\sup_{x\in A}\norm{x}$. In this work we assume that a probability space $(\Omega,\Sigma,\Prob)$ and a filtration $\mathbb{F}:=(\mathcal{F}_{n})_{n\in\nat_{0}}$ therein are given. 
\section{Problem Formulation}
We begin by stating the online learning problem in the classical setting: 
\subsection{Online Learning with Classical Aggregate constraint goal}
\label{Subsec:OnlineLearn}
At each time $t$, a learner decides for an option for action $X_{t}$ from an apriori known compact convex set $\X\subset\real^{D}$, which we refer to as a \textit{feasible set}. Subsequently, nature chooses the \textit{loss function} $f^{(i)}_{t}$ and charges the learner with loss $f_{t}(X_{t})$. As already noticed in prior works, it is advantageous from a practical point of view to take into account a time-varying \textit{penalty function} $g_{t}=(g_{t}^{r})_{r\in [R]}$ chosen by nature and revealed to the learner at a time $t$. This function leads to a time-varying \textit{constraint} $g_{t}(X_{t})\leq 0$. In an online learning setting, one often assumes additionally that the learner can extract information about $f_{t}$ and $g_{t}$ via access to the first-order oracle in order to choose the action for the time step $t+1$. Although this assumption is sometimes not realistic, investigation respective to this case is usually a stepping stone for designing methods in the bandit case, i.e., in the case where the learner has only access to the immediate objective- and constraint value (see e.g. Chapter 4 in \cite{Shalev-Shwartz2012} and \cite{Mahdavi1}).

Given a time horizon $T\in\nat$. The goal of the learner is to find a sequence $(X_{t})_{t=1}^{T}$ in the feasible set $\X$ that minimizes the loss $f_{t}(X_{t})$ and simultaneously fulfills the constraint $g_{t}(X_{t})\leq 0$. Since the learner cannot look into the future and therefore has to decide on her next action utilizing the current information about the loss and penalty function, the problem stated before is intractable. For this reason, one may consider a more realistic goal of finding a sequence that minimizes the time-average loss $\sum_{t=1}^{T}f_{t}(X_{t})/T$, and that ensures the fulfillment of the constraint on average over time $\sum_{t=1}^{T}g_{t}(X_{t})/T\leq 0$.

\subsection{Beyond Aggregate Constraint}
 One crucial issue about the latter goal concerning the constraint fulfillment $\sum_{t=1}^{T}g_{t}(X_{t})/T\leq 0$ is that it does not consider the possibility that the summands can cancel each other out: As long as $g_{t}(X_{t})$ is negative and small enough for specific time slots $t\in[T]$, large $g_{t}(X_{t})$ for another time slots $t\in[T]$ is admissible for the goal $\sum_{t=1}^{T}g_{t}(X_{t})/T\leq 0$.  
In order to resolute this issue, we propose a new online learning goal, that is:
\begin{equation}
\label{Eq:aoaoshshshsjddgdggdd}
\min_{(X_{t})_{t=1}^{T}\subset\X}\sum_{t=1}^{T}f_{t}(X_{t})\quad\text{s.t. }\sum_{t=1}^{T}h(g_{t}(X_{t}))\leq 0
\end{equation}
for a monotonically increasing function $h:\real\rightarrow\real$. In case that $h$ is also non-negative, this function ensures that cancellation between summands cannot occur since they are all non-negative and that the ordering between values of $g_{t}$ remains preserved. 

An example of $h$ is $h(\cdot)=[\cdot]_{+}$. This choice leads to the constraint $\sum_{t=1}^{T}[g_{t}(x_{t})]_{+}/T\leq 0$, that is stronger than $\sum_{t=1}^{T}g_{t}(x_{t})/T\leq 0$. Another example of $h$ is $h(\cdot)=[\cdot]^{p}_{+}$ with $p> 1$ leading to the constraint $\sum_{t=1}^{T}[g_{t}(x_{t})]^{p}_{+}/T\leq 0$. With increasing $p$, $[\cdot]^{p}_{+}$ penalizes large values of $g_{t}$ with the cost of loosening the sensitivity of the sum for small, non-negative values of $g_{t}$.  
\subsection{Noisy First-order Feedback}
\label{Subsec:Noisy}
As discussed in Subsection \ref{Subsec:OnlineLearn}, the online learning setting assumes that first-order information about the current loss function is available at each time slot. However, perfect first-order feedback is, in general, hard to obtain. Thus, we include in our model the possibility that the learner has only access to the noisy first-order oracle. Expressly, we assume that at each time $t$ and for a given action $X_{t}\in\mathcal{X}$, the learner can query an estimate of $\hat{v}_{t}$ of the (sub-)gradient $\nabla f_{t}(X_{t})$ satisfying $\Erw[\norm{\hat{v}_{t}}_{*}]<\infty$ and $\Erw[\hat{v}_{t}|\mathcal{F}_{t}]=\nabla f_{t}(X_{t})$, where $\mathcal{F}_{t}$ is an element of a filtration $\mathbb{F}:=(\mathcal{F}_{t})_{t\in\nat_{0}}$ on a probability space $(\Omega,\Sigma,\Prob)$. The canonical and commonly-used filtration in the literature is the filtration of the history of the considered iterates. Equivalently, we can model the stochastic (sub-)gradient by
\begin{equation}
\label{Eq:NoiseGrad}
\hat{v}_{t}=\nabla f(X_{t})+\xi_{t+1},
\end{equation}
where $(\xi_{t})_{t\in\nat}$ is a $\real^{D}$-valued \emph{$\mathbb{F}$-martingale difference sequence}, i.e. it is \emph{$\mathbb{F}$-adapted}, in the sense that  $\xi_{t}$ is $\mathcal{F}_{t}$-measureable for all $t\in\nat$, and that its members are \emph{conditionally mean zero}, in the sense that $\Erw[\xi_{t}|\mathcal{F}_{t-1}]=0$, for all $t\in\nat$.
\subsection{Applications}
In order to show the practical relevance of the the aspects discussed above (especially: noisy feedback and other notion of aggregate constraint), we give in the following some specific resource allocation examples.
\begin{example}[Economic Dispatch
	]
	\label{Ex:Eco}
	

Consider a system with $D$ producers (e.g. electric generator or data processing center) of a certain commodity (e.g. electrical power or data processing unit). At each time slot $t\in\nat$, the goal of economic dispatch is to decide for each $i\in [D]$ the output $X_{t}^{(i)}$ of producer $i$ causing costs $c_{t}^{(i)}(X_{t}^{(i)})$ such that the total producing cost $\sum_{i=1}^{D}c_{t}^{(i)}(x_{t}^{(i)})$ remains low, and the extrinsic given demand $d_{t}$ is balanced. A possible loss function to this regard is:
\begin{equation*}
f_{t}:\real^{D}\rightarrow\real,~ x\mapsto\sum_{i=1}^{D}c_{t}^{(i)}(x^{(i)})+\xi\left(\sum_{i=1}^{D}x^{(i)} -d_{t}\right)^{2}
\end{equation*}


In solving the economic dispatch problem, one has to consider several constraints. For instance, the output of each producer $i\in [D]$ can not exceed the value $x^{(i)}_{\max}\in\real_{\geq 0}$ specified e.g. technical restrictions of the producer. Since the violation of this constraint might not be tolerable, Prior works settle the feasible set in the online learning problem formulation as the box-type set $\mathcal{X}=\lrbrace{x\in\real^{D}:~0\leq x^{(i)}\leq x_{\max}^{(i)},~\forall i\in [D]}$. This kind of feasible set is popular in applications (see e.g. \cite{Narayanaswamy2012,Chen2017}). Instead of considering the constraint specified by technical restrictions of the producers, one may instead consider the constraint specified by total output production resulting in the feasible set: 
\begin{equation*}
\mathcal{X}=\lrbrace{x\in\real^{D}_{\geq 0}:~\sum_{i=1}^{D}x^{(i)}\leq B}.
\end{equation*}
In the application of economic dispatch for electrical power, above feasible set corresponds to power transmission restriction specified by the wireline capacity.


Another constraint which one may consider is that the total negative externality (e.g. pollution) $\sum_{i=1}^{D}E^{i\rightarrow j}_{t}(x_{t}^{(i)})$ of a certain kind (e.g. substance) $j\in [N]$ should not exceed a particular value $E^{j}_{\max}$ (e.g. specified by government regulator). In the previous sum, $E^{i\rightarrow j}_{t}$ denotes a function specifying the negative externality of a certain kind $j\in [N]$ at time slot $t$ given a specific output of the producer $i\in [D]$. This gives rise to the penalty function:
\begin{equation}
\label{Eq:aaissidhdjdhdggdgddd}
\begin{split}
g_{t}:\real^{D}\rightarrow\real^{N},~ x\mapsto \left( \sum_{i=1}^{D}E^{i\rightarrow j}_{t}(x^{(i)})-E^{j}_{\max}\right)_{j=1}^{N}
\end{split}
\end{equation}
In the strict sense, it is absurd to think that inter-time compensation of negative externalities occurs. For instance, pollution causes damages irrespective of whether in the earlier time emission constraint is strictly preserved. Rather, the system manager should ensure that $g_{t}(X_{t})$ remains at each time step small. For this reason, the aim of preserving the relaxed constraint given in \eqref{Eq:aoaoshshshsjddgdggdd} seems to be more plausible than the aim of preserving $\sum_{t=1}^{T}g_{t}(X_{t})/T\leq 0$.

In order to see where disturbance of the gradient feedback might occur, let us assume that the cost $c_{t}^{(i)}$ is quadratic, i.e.:
\begin{equation*}
c_{t}^{(i)}(x)=a^{(i)}_{t}x^{2}+b^{(i)}_{t}x,
\end{equation*}
where $a^{(i)}_{t}$ and $b^{(i)}_{t}$ are non-negative constants depending on the specific sort of producer. For instance, if the considered commodity is the electrical power and the considered producer is a steam turbine unit, the constants $a^{(i)}_{t}$ and $b^{(i)}_{t}$ depend on the current fuel price, changing over time, and on the maintenance price, including labor price \cite{wood2014}. The first-order information of the cost function can be given explicitly as $\nabla f_{t}(x)=(a^{(i)}_{t}x+b^{(i)}_{t})_{i}$. In reality, one usually has only a disturbed observation of the prices $a^{(i)}_{t}$ and $b^{(i)}_{t}$. For instance, considered the previous instance, the disturbance is due to the uncertainty of the estimate of the current fuel cost. We model this fact by defining the noisy feedback as follows:
\begin{equation*}
\hat{v}_{t}^{(i)}=(a^{(i)}_{t}+\tilde{\xi}^{(i),1}_{t+1})X_{t}+b^{(i)}_{t}+\tilde{\xi}^{(i),2}_{t+1},
\end{equation*}
where $(\tilde{\xi}^{(i),1}_{t})_{t}$ and $(\tilde{\xi}^{(i),2}_{t})_{t}$ are martingale w.r.t. a filtration containing the history of $(X_{t})_{t}$. It holds: $\Erw_{t}[\tilde{\xi}_{t+1}X_{t}]=\Erw_{t}[\tilde{\xi}_{t+1}]X_{t}=0$ and thus by defining $\xi^{(i)}_{t+1}=\tilde{\xi}^{(i),1}_{t+1}X_{t}+\tilde{\xi}^{(i),2}_{t+1}$ it follows that $\hat{v}_{t}=\nabla f_{t}(X_{t})+\xi_{t+1}$, where $(\xi_{t})$ is a martingale. This formulation of $\hat{v}_{t}$ coincides with the model described in Subsection \ref{Subsec:Noisy}.

\end{example}
\begin{example}[Trajectory Tracking]
Consider a dynamical system:
\begin{equation*}
X_{t+1}=AX_{t}+BU_{t},
\end{equation*}
where $X_{t}$ is the location of a robot and $U_{t}$ is the control action. Let $Y_{t}$ be the location of the target at time slot $t$. The objective of trajectory tracking at time slot $t$ is to choose a control action $U_{t}$ s.t. the tracking error $f_{t}(X_{t})=\norm{X_{t}-Y_{t}}^{2}_{2}/2$ and the smoothness measure $(\beta/2)\norm{X_{t}-X_{t-1}}_{2}^{2}$, where $\beta>0$, is minimized. Possible constraint which one may consider is the energy constraint $\norm{u_{t}}_{2}^{2}\leq u_{2,\max}$ and extremum control value constraints $u_{\min}\leq u_{t}^{(i)}\leq u_{\max}$. Considering a time horizon $T$ we may solve for a given initial states $x_{0}$, the following online problem:
\begin{equation}
\begin{split}
&\min_{(u_{t})_{t=1}^{T}} \sum_{t=1}^{T}\norm{Ax_{t}+B u_{t}-y_{t+1}}_{2}^{2}+\tfrac{\beta}{2}\norm{(A-I)x_{t}+Bu_{t}}_{2}^{2} \\
&\text{s.t. }\sum_{i=1}^{D}(u^{(i)}_{t})^{2}\leq u_{\max,2} \\
&~~~~ u_{\min}^{(i)}\leq u_{t}^{(i)}\leq u_{\max}^{(i)} \quad i\in [D].
\end{split}
\end{equation}
Defining the loss function as:
\begin{equation*}
f_{t}(u)=\norm{Ax_{t}+Bu-y_{t+1}}_{2}^{2}+\tfrac{\beta}{2}\norm{(A-I)x_{t}+Bu}_{2}^{2},
\end{equation*}
we obtain that the loss feedback is given by:
\begin{equation*}
\nabla f_{t}(u)=2 B^{\T}(Ax_{t}-y_{t+1})+(2+\beta) B^{\T}B u+\beta\left[ B^{\T}(A-I)x_{t}\right]. 
\end{equation*}
A possible source of disturbance in the gradient feedback is the location $y_{t+1}$ of the target at time $t+1$. One may also consider the sparsity constraint $\norm{u_{t}}_{1}\leq u_{1,\max}$ instead of/in addition to the energy constraint.	
\end{example}
\subsection{Performance measure and Our Goal}
\label{SubSec:Perf}
In this work, we use the following performance measure, called dynamic regret (see \cite{Zinkevich2003}), which is defined for a sequence of decisions $X_{1},\ldots,X_{t}$ of the online learner as follows:
\begin{equation*}
\Reg^{\text{d}}_{t}:=\sum_{\tau=1}^{t}\left( f_{\tau}(X_{\tau})-f_{\tau}(x_{\tau}^{*})\right)\quad\text{(Dynamic Regret)},
\end{equation*}
where our benchmark is the sequence of the best dynamic solution $\mathbbm{x}^{*}_{t}:=(x^{*}_{\tau})_{\tau\in [t]}$ with:
\begin{equation*}
x^{*}_{\tau}\in\argmin_{x\in\mathcal{X}}f_{\tau}(x)~\text{s.t. }g_{\tau}(x)\leq 0,\quad\forall \tau\in [t].
\end{equation*}
Throughout this work, we assume that the following regularity condition on the cost function holds:
\begin{assum}
\begin{itemize}
	\item $f_{\tau}$ is convex and subdifferentiable of $\mathcal{X}$.
	\item For each $\tau\in [t]$ and for all $x\in\mathcal{X}$, we have a fix choice of subgradient $\nabla f_{\tau}(x)$ such that $\sup_{x\in\mathcal{X}}\norm{\nabla f_{\tau}(x)}_{*}<\infty$.
\end{itemize}		
\end{assum}

In general $\Reg^{\text{d}}_{t}$ can be negative. This case occurs if for some $\tau\in [t]$, $X_{\tau}$ is not feasible w.r.t. to the constraint $g_{\tau}(x)\leq 0$. However, we have the following lower bound by the mean value Theorem:
\begin{equation}
\label{Eq:aaiisjjsshhdjdhhssssaa}
\Reg^{\text{d}}_{t}\geq-\sum_{\tau=1}^{t}\sup_{\mathcal{X}_{\tau}}\norm{\nabla f_{\tau}}_{*}\norm{\bx_{\tau}-\bx_{\tau}^{*}}\geq -D_{\X}\sum_{\tau=1}^{t}L_{\tau},
\end{equation}
where $L_{\tau}>0$ is a constant fulfilling:
\begin{equation*}
\norm{\nabla f_{\tau}}_{*}\leq L_{\tau}.
\end{equation*}

Related to the dynamic regret, is the following performance measure called dynamic gap defined for $\mathbbm{x}_{t}:=(x_{\tau})_{\tau\in [t]}\subset\mathcal{X}$ as follows:
\begin{equation*}
\Gap_{t}^{\d}(\mathbbm{x}_{t}):=\sum_{\tau=1}^{t}\inn{X_{\tau}-x_{\tau}}{\nabla f_{\tau}(X_{\tau})},
\end{equation*}
which we often use in our analysis. The reason is that besides:
\begin{equation}
\label{Eq:aaiisshhjdhdggshhsss}
\Gap_{t}^{\d}(\mathbbm{x}_{t}^{*})\geq \Reg^{\d}_{t},
\end{equation}
which follows from the convexity of $f_{\tau}$, for all $\tau\in [T]$, the gradients which constitute the building blocks of PDOGA appears in its formulation.

Performance measure for the feasibility of the learner decision respective to the constraint $g_{\tau}(x)\leq 0$, $\tau\in [t]$, which we use in this work is the following:
\begin{equation*}
\hCFit^{r}_{t}:=\sum_{\tau=1}^{t}h(g_{r}(X_{t})).
\end{equation*}
We assume that the following regularity condition on $\hCFit$:
\begin{assum}
	\begin{itemize}
	\item For all $r\in [N]$, $g_{t}^{(r)}$ is convex and (sub-)differentiable on $\mathcal{X}_{t}$.
	\item $h$ is monotonically increasing and sub-differentiable on $\real$.
	\end{itemize}		
\end{assum}
\section{Algorithm Design}
In this section, we provide a novel algorithm which we call generalized online mirror saddle-point (GOMSP) whose aim is to generate online decisions minimizing the performance measures introduced in Subsection \ref{SubSec:Perf}. For convenience, we provide a summary of our finding in Algorithm \ref{Alg:aoaishhjddhhddddeee}.
\begin{algorithm}[h]
	\caption{Generalized Online Mirror Saddle-Point (GOMSP) Method}
	\begin{algorithmic}
		\REQUIRE Time horizon $T\in \nat$, learning rate $\gamma>0$, price sensitivity $\beta>0$, regularization constant $\alpha>0$.
		\REQUIRE Initial score $Y_{1}\in\real^{D}$, - primal iterate $X_{1}=\Phi(Y_{1})$, - dual variable $\Lambda_{1}\in\real_{\geq 0}^{R}$
		\FOR{$t=0,1,2,\ldots,T$}
		\STATE Observe the noisy first-order feedback $$\hat{v}_{t}:=\nabla f_{t} (X_{t})+\xi_{t+1}$$
		\FOR{$r=1,\ldots,R$}
		\STATE Query the first-order $h$-load feedback $\nabla (h\circ g^{r}_{t})(X_{t})$
		\ENDFOR
		\STATE Update the score vector as in \eqref{Eq:Score} 
		\STATE Update the dual variable:
		\begin{equation*}
		\Lambda_{t+1}=\Pi_{\real^{R}_{\geq 0}}\left[(1-\alpha\gamma)\Lambda_{t}+\gamma h(g_{t}(X_{t}))\right] 
		\end{equation*}
		\STATE Update primal variable as in \eqref{Eq:aiaisshsggfgfhddd}:
	
		\ENDFOR
	\end{algorithmic}
	\label{Alg:aoaishhjddhhddddeee}
\end{algorithm}

\subsection{Primal Variable Update - Mirror Descent}
The basis of the primal update of GOMSP is the score vector which is generated from the actual noisy first-order objective - and constraint feedback by the following rule:
\begin{equation}
Y_{t+1}=Y_{t}-\gamma\left(  \hat{v}_{t}+\sum_{r=1}^{N} \left[ \nabla (h\circ g^{(r)}_{t})(X_{t})\right]\Lambda^{(r)}_{t}\right).
\label{Eq:Score}
\end{equation}
The variable $\Lambda^{(r)}_{t}$ is a Lagrange variable that corresponds to the $r$-th constraint, whose update rule will be specified later.

To realize the primal update $X_{t+1}$ from the score vector $Y_{t+1}$ at the time slot $t+1$, we use the so-called mirror map defined in the following:
\begin{definition}[Regularizer and Mirror Map]
	Let $\X$ be a compact convex subset of a Euclidean normed space $(\real^{D},\norm{\cdot})$, and $K>0$. We say $\psi:\X\rightarrow\mathbb{R}$ is a $K$-strongly convex \textit{regularizer} (or also \textit{penalty function}) on $\X$, if $\psi$ is continuous and $K$-strongly convex on $\Z$. 
	The mirror map $\Phi:(\real^{D},\norm{\cdot}_{*})\rightarrow\X$ induced by $\psi$ is defined by:
	$$\Phi(y):=\argmax_{x\in\Z}\left\{\left\langle y,x\right\rangle-\psi(x)\right\}$$
\end{definition}
Clearly, the mirror map is a generalization of the usual Euclidean projection.
An interesting example of mirror maps is the so-called logit choice $\Phi(y)=\exp(y)/\sum_{l=1}^{D}\exp(y_{l})$ which is generated by the $1$-strongly convex regularizer $\psi(x)=\sum_{k=1}^{D}x_{k}\log x_{k}$ on the probability simplex $\Delta\subset(\real^{D},\norm{\cdot}_{1})$. Other instance of mirror map worth to mentions is $\Phi(Y) = \exp(Y)/(1 + \norm{\exp(Y)}_{1})$ which is defined on the set $\mathcal{X}$ of positive semidefinite matrices $X$ having the nuclear norm $\norm{X}_{1}:=\text{tr}(\abs{X})\leq 1$. The von-Neumann entropy $\psi(X) = \text{tr}(X \log X) + (1 - \text{tr} X) \log(1 - \text{tr}X)$ is a $(1/2)$-strongly convex regularizer on $\Z$ \cite{Kakade2012} (for derivation see e.g. \cite{Merti2017}). 

Having introduced the notion of the mirror map, we can define the primal update rule given a score vector $Y_{t+1}$ and a regularizer $\psi$ as follows:
\begin{equation}
\label{Eq:aiaisshsggfgfhddd}
X_{t+1}=\Phi(Y_{t+1}).
\end{equation}
In case that the chosen regularizer is the Euclidean norm, one can write:
\begin{equation*}
X_{t+1}=\Pi_{\X}\left[ X_{t}-\gamma\left(  \hat{v}_{t}+\sum_{r=1}^{N} \left[ \nabla (h\circ g^{(r)}_{t})(X_{t})\right]\Lambda^{(r)}_{t}\right) \right], 
\end{equation*} 
which is the update rule for the projected noisy gradient descent related to the online Lagrangian:
\begin{equation}
\mathcal{L}_{t}(x,\lambda)=f_{t}(x)+\lambda^{\T}h(g_{t}(x))),\quad x\in\mathcal{X},~\lambda\in\real^{N}_{\geq 0}.\label{Eq:aaussusdggddhdgdgdgdd}
\end{equation} 
This Lagrangian corresponds to the optimization problem:
\begin{equation*}
\min_{x\in\X}f_{t}(x)\quad\text{s.t.}\quad h(g_{t}(x))\leq 0.
\end{equation*}
This observation explains our motivation for defining the primal update as in \eqref{Eq:Score} and \eqref{Eq:aiaisshsggfgfhddd} in case the underlying projection operator is Euclidean.

The reason to use a "projection" mapping, which is in our case the mirror map, more general than Euclidean projection is that it yields a versatile method for the online decision-making process. The mirror map allows us to adapt the first-order penalized iterative method to the geometry of the underlying feasible set of the decision problem and to leverage from the weaker dimension dependency of the algorithm performance. This effect has been recognized earlier in connection with the simple gradient descent method \cite{Nemirovski2008,Nesterov2009}: Using the logit choice instead of Euclidean projection for realizing iterative simple first-order descent method for convex optimization problem on simplex yields a convergence guarantee which depends logarithmically on the - instead of the square root of the underlying dimension. Moreover, using a mirror map other than the Euclidean projection might yield a better dimension dependency of the noise term in the resulted bound since the noise influence is no longer measured by the Euclidean norm.      

Another factor that is variable in the update rule of GOMSP is the function $h$. In this regard, we provide for the convenience of the reader a particular form of \eqref{Eq:Score} in the following: 
\begin{example}
The function $h$ which we mainly have in mind is $h(\cdot)=[\cdot]^{p}_{+}$. By choosing the subgradient as follows:
\begin{equation*}
\nabla (h\circ g^{r}_{t})(x)=
\begin{cases}
p~(g_{t}^{r}(x))^{p-1} \nabla g_{t}^{(r)}(x)&\quad \text{if }g_{t}^{(r)}(x)\geq 0\\
0&\quad \text{else},
\end{cases}
\end{equation*}
we may write:
\begin{equation*}
Y_{t+1}=Y_{t}-\gamma\left(  \hat{v}_{t}+p\sum_{r\in\mathcal{A}_{t}} (g_{t}^{r}(X_{t}))^{p-1} \nabla g_{t}^{r}(X_{t})\Lambda^{(r)}_{t}\right),
\label{Eq:Score2}
\end{equation*}
where $\mathcal{A}_{t}=\lrbrace{r\in [R]:~g^{r}_{t}(x)> 0}$ denotes the set of active constraints at time $t$.
\end{example}

\subsection{Dual variable update}
The primary role of the dual variable $\Lambda_{t}$ is to provide the primal variable information about the actual amount of the constraint violation. One might draw the analogy between this variable and the prices in markets whose role is to signal the participants to what extent the corresponding resources are scarce. In particular, $\Lambda_{t}$ has to reflect the actual constraint violation state. Besides, another crucial requirement for the dual variable is that it does not grow unboundedly. Otherwise, the constraint term in the primal update overthrow the cost part and consequently the primal update concentrates on reducing the amount of violation rather than minimizing the regret.
 
In hindsight of those aspects, we give the update rule the dual variable of GOMSP as follows:
\begin{equation}
\label{Eq:Dual}
\Lambda_{t+1}=\Pi_{\real^{R}_{\geq 0}}\left[(1-\alpha\gamma)\Lambda_{t}+\gamma h(g_{t}(X_{t}))\right]. 
\end{equation}
In case that $\alpha=0$, \eqref{Eq:Dual} turn to the simple dual gradient ascent corresponds to the Lagrangian \eqref{Eq:aaussusdggddhdgdgdgdd}. The idea behind adding the regularization term $\alpha\gamma \Lambda_{t}$ is to reduce the growth of the dual variable by decaying the influence of previous constraint states: To see this, notice that if $h\geq 0$, we can omit the projection operator in the expression \eqref{Eq:Dual}. Consequently:
\begin{equation*}
\Lambda_{t+1}=(1-\alpha\gamma)^{t}\Lambda_{1}+\gamma\sum_{\tau=1}^{t}(1-\alpha\gamma)^{\tau-t}h(g_{\tau}(X_{\tau})).
\end{equation*}
If $\Lambda_{1}=0$, we have:
\begin{equation*}
\Lambda_{t+1}=\gamma\sum_{\tau=1}^{t}(1-\alpha\gamma)^{t-\tau}h(g_{\tau}(X_{\tau})).
\end{equation*}
Thus the influence of the $\tau$-th constraint function term to the dual variable at time $t+1$ decays with $\exp(-\tau a)$ where $a=-\ln(1-\alpha\gamma)$. This can be advantageous in the online environment since $g_{\tau}$ for different time-slots (with large distance) not necessarily correlate.

Another reason for defining \eqref{Eq:Dual} is that the resulted dual dynamic gives rise about the cumulative constraint state in the following sense:
\begin{lemma}[From Dual Dynamic to Constraint Violation]
	\label{Lem:aauissshdhdhhdjsss}
	Suppose that $\Lambda_{1}=0$. It holds:
	\begin{equation*}
	\hCFit_{t}^{r}\leq \tfrac{\norm{\Lambda_{t+1}}_{2}}{\gamma}+\alpha\sum_{\tau=1}^{t}\norm{\Lambda_{\tau}}_{2}
	\end{equation*}
\end{lemma}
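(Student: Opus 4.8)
The plan is to read off a per-coordinate drift inequality from the dual update \eqref{Eq:Dual} and then telescope. Fix a coordinate $r\in [R]$. Since the projection $\Pi_{\real^{R}_{\geq 0}}$ acts componentwise as the scalar positive part, the $r$-th entry of \eqref{Eq:Dual} reads $\Lambda^{(r)}_{\tau+1}=\big[(1-\alpha\gamma)\Lambda^{(r)}_{\tau}+\gamma\, h(g^{(r)}_{\tau}(X_{\tau}))\big]_{+}$. The elementary inequality $[a]_{+}\geq a$ (with equality iff $a\geq 0$) then yields the one-step bound
\begin{equation*}
\Lambda^{(r)}_{\tau+1}\geq (1-\alpha\gamma)\Lambda^{(r)}_{\tau}+\gamma\, h(g^{(r)}_{\tau}(X_{\tau})),
\end{equation*}
which I would rearrange into $\gamma\, h(g^{(r)}_{\tau}(X_{\tau}))\leq \big(\Lambda^{(r)}_{\tau+1}-\Lambda^{(r)}_{\tau}\big)+\alpha\gamma\,\Lambda^{(r)}_{\tau}$.

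Next I would sum this inequality over $\tau=1,\ldots,t$. The first group of terms telescopes to $\Lambda^{(r)}_{t+1}-\Lambda^{(r)}_{1}=\Lambda^{(r)}_{t+1}$, where I use the hypothesis $\Lambda_{1}=0$. Dividing through by $\gamma>0$ gives the coordinatewise estimate
\begin{equation*}
\hCFit^{r}_{t}=\sum_{\tau=1}^{t}h(g^{(r)}_{\tau}(X_{\tau}))\leq \frac{\Lambda^{(r)}_{t+1}}{\gamma}+\alpha\sum_{\tau=1}^{t}\Lambda^{(r)}_{\tau}.
\end{equation*}

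Finally, since every iterate $\Lambda_{\tau}$ lies in $\real^{R}_{\geq 0}$, each coordinate is non-negative and hence dominated by the full Euclidean norm, $0\leq \Lambda^{(r)}_{\tau}\leq \norm{\Lambda_{\tau}}_{2}$; applying this to the two terms on the right-hand side produces exactly the claimed bound. There is no genuine obstacle here: the whole argument is a one-step drift estimate plus a telescoping sum. The only point that needs care is recognizing that projecting onto the non-negative orthant can only increase each coordinate relative to its pre-projection value, which is what converts the (otherwise awkward) projection into the clean inequality $\Lambda^{(r)}_{\tau+1}\geq(1-\alpha\gamma)\Lambda^{(r)}_{\tau}+\gamma\,h(g^{(r)}_{\tau}(X_{\tau}))$ that drives the estimate.
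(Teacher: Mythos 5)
Your proof is correct and follows essentially the same route as the paper's: the one-step drift inequality $\Lambda^{(r)}_{\tau+1}\geq(1-\alpha\gamma)\Lambda^{(r)}_{\tau}+\gamma h(g^{(r)}_{\tau}(X_{\tau}))$ obtained from the projection, followed by telescoping with $\Lambda_{1}=0$ and the bound $\Lambda^{(r)}_{\tau}\leq\norm{\Lambda_{\tau}}_{2}$. Your write-up is in fact slightly more explicit than the paper's about why the projection yields the drift inequality.
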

\begin{proof}
	By \eqref{Eq:Dual}, we have $\Lambda^{r}_{\tau+1}\geq\Lambda^{r}_{\tau}+\gamma h(g^{r}_{\tau}(X_{\tau}))-\alpha\gamma\Lambda^{r}_{\tau}$.
	So summing, telescoping, and the assumption $\Lambda_{1}=0$ give:
	\begin{equation}
	\label{Eq:aasishdhhhdd}
	\gamma\sum_{\tau=1}^{t}h(g^{r}_{\tau}(X_{\tau}))
	\leq \Lambda_{t+1}^{r}+\alpha\gamma\sum_{\tau=1}^{t}\Lambda_{\tau}^{r}
	\end{equation}
	By the fact that $\Lambda^{r}_{t}\geq 0$, it holds $\Lambda_{t}^{r}\leq \norm{\Lambda_{t}}_{2}$. Finally, the latter and \eqref{Eq:aasishdhhhdd} give the desired inequality.
\end{proof}
\section{Performance Analysis}
\label{Sec:Perf}
To analyze the performance of the algorithm, we leverage from Lyapunov-type argumentation. In doing that, we use as energy functions both, the distance between the iterate $Y_{t}$ of the algorithm and the current constraint minimizer of the cost function and the norm of the dual variable. 

This sort of Lyapunov function is standard \cite{Zinkevich2003} besides the fact that we use the Fenchel coupling as the primal iterate distance function defined as follows:
\begin{definition}[Fenchel Coupling]
	Let $\psi:\X\rightarrow\real$ be a penalty function on a compact convex subset $\X$ of a Euclidean normed space $(\real^{D},\norm{\cdot})$. The Fenchel coupling induced by $\psi$ is defined as $F:\mathcal{X}\times(\real^{D},\norm{\cdot}_{*})\rightarrow\real_{\geq 0}$ given by:
	$$ F(x,y):=\psi(x)+\psi^{*}(y)-\inn{y}{x}$$
\end{definition} 
As the Lyapunov function, we use specifically:
\begin{equation*}
\mathcal{E}_{t}(x)=\underbrace{F(x,Y_{t})}_{=:\mathcal{E}^{1}_{t}(x)}+\underbrace{\tfrac{\norm{\Lambda_{t}}_{2}}{2}}_{=:\mathcal{E}_{t}^{2}}.
\end{equation*}
To analyze the performance of the proposed algorithm, we give in the following an upper bound for the primal dynamic and dual dynamic.
\subsection{Lyapunov Analysis}
\paragraph*{Primal Dynamic}
For convenience, we rewrite \eqref{Eq:Score} as:
\begin{equation*}
\begin{split}
Y_{t+1}&=Y_{t}-\gamma\left( \hat{v}_{t}- \left[ \nabla (h\circ g_{t})(X_{t})\right]^{\T}\Lambda_{t}\right),
\end{split}
\end{equation*}
where:
\begin{equation*}
\left[ \nabla (h\circ g_{t})(X_{t})\right]^{\T} =\left[\nabla (h\circ g^{1}_{t})(X_{t}),\ldots,\nabla (h\circ g^{R}_{t})(X_{t}) \right] 
\end{equation*}
The following result gives the upper bound of the one-step difference $\Delta\mathcal{E}^{1}_{t}(x):=\mathcal{E}^{1}_{t+1}(x)-\mathcal{E}^{1}_{t}(x)$:
\begin{lemma}
	\label{Lem:aaiisshdggdhgdhsssss}
	For any $x\in\mathcal{X}$:
\begin{align*}
\Delta\mathcal{E}^{1}_{t}(x)
\leq&-\gamma\inn{X_{t}-x}{\nabla f_{t}(X_{t})}\\
&-\gamma\inn{X_{t}-x}{[\nabla (h\circ g_{t})(X_{t})]^{\T}\Lambda_{t}}\\&
+\gamma\tilde{\xi}_{t+1}+\tfrac{\gamma^{2}C_{1,\psi}^{2}}{K}\norm{\Lambda_{t}}_{2}^{2}+\tfrac{2\gamma^{2}}{K}(C_{2,\psi}^{2}+\norm{\xi_{t+1}}_{*}^{2}),
\end{align*}
where $C_{1,\psi},C_{2,\psi}$, are the smallest  constants $C_{1},C_2>0$ satisfying for all $\lambda\in\real^{R}_{\geq 0}$ and $x\in\mathcal{X}$:
\begin{equation*}
\norm{[\nabla (h\circ g_{t})(x)]^{\T}\lambda}_{*}\leq C_{1}\norm{\lambda}_{2}\quad\norm{\nabla f_{t}(x)}_{*}\leq C_{2}
\end{equation*}
\end{lemma}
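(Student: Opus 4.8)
The plan is to treat $\mathcal{E}^{1}_{t}(x)=F(x,Y_{t})$ and to exploit the smoothness of the Fenchel coupling in its second argument. First I would record the workhorse inequality that for any $x\in\X$ and any scores $y,y'$,
$$F(x,y')\leq F(x,y)+\inn{y'-y}{\Phi(y)-x}+\tfrac{1}{2K}\norm{y'-y}_{*}^{2}.$$
This is where the $K$-strong convexity of $\psi$ enters: since $\psi$ is $K$-strongly convex, its conjugate $\psi^{*}$ is $(1/K)$-smooth with $\nabla\psi^{*}=\Phi$, so inserting the descent lemma for $\psi^{*}$ into $F(x,y')=\psi(x)+\psi^{*}(y')-\inn{y'}{x}$ and collecting terms yields the displayed bound. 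I expect this duality fact to be the main conceptual step, though it is classical and the rest of the argument is mechanical once it is in hand.

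Next I would apply this inequality with $y=Y_{t}$ and $y'=Y_{t+1}=Y_{t}-\gamma v_{t}$, where $v_{t}:=\hat{v}_{t}+[\nabla(h\circ g_{t})(X_{t})]^{\T}\Lambda_{t}$ is the effective descent direction appearing in \eqref{Eq:Score}. Using $\Phi(Y_{t})=X_{t}$ and $y'-y=-\gamma v_{t}$, this immediately gives
$$\Delta\mathcal{E}^{1}_{t}(x)\leq-\gamma\inn{X_{t}-x}{v_{t}}+\tfrac{\gamma^{2}}{2K}\norm{v_{t}}_{*}^{2}.$$

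Then I would split the two resulting terms. For the linear part, substituting $\hat{v}_{t}=\nabla f_{t}(X_{t})+\xi_{t+1}$ reproduces the first two inner products of the claim together with a noise term, which I would abbreviate by $\tilde{\xi}_{t+1}:=\inn{x-X_{t}}{\xi_{t+1}}$ (this is precisely the term that will vanish in conditional expectation later, since $X_{t}$ is $\mathcal{F}_{t}$-measurable and $\Erw[\xi_{t+1}\mid\mathcal{F}_{t}]=0$). For the quadratic part, I would apply $\norm{a+b}_{*}^{2}\leq 2\norm{a}_{*}^{2}+2\norm{b}_{*}^{2}$ once to peel off the dual-penalty piece, bounding it by $C_{1,\psi}^{2}\norm{\Lambda_{t}}_{2}^{2}$ through the defining inequality of $C_{1,\psi}$, and then a second time on $\hat{v}_{t}=\nabla f_{t}(X_{t})+\xi_{t+1}$ together with $\norm{\nabla f_{t}}_{*}\leq C_{2,\psi}$. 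Carrying the prefactor $1/(2K)$ through the two doublings yields exactly the coefficients $\gamma^{2}C_{1,\psi}^{2}/K$ and $2\gamma^{2}/K$ in the statement, so the remaining work is purely bookkeeping of constants rather than any genuine difficulty.
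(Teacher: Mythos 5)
Your proposal is correct and follows essentially the same route as the paper: the paper's proof likewise plugs the score update \eqref{Eq:Score} into the smoothness bound $F(p,y')\leq F(p,y)+\inn{\Phi(y)-p}{y'-y}+(1/2K)\norm{y'-y}_{*}^{2}$ of Proposition \ref{Prop:aaisshhfjffjfjfff} and then peels apart the quadratic term with the triangle and quadratic-mean inequalities, which is exactly your double application of $\norm{a+b}_{*}^{2}\leq 2\norm{a}_{*}^{2}+2\norm{b}_{*}^{2}$. Your constant bookkeeping reproduces the stated coefficients $\gamma^{2}C_{1,\psi}^{2}/K$ and $2\gamma^{2}/K$ exactly, and your explicit definition $\tilde{\xi}_{t+1}=\inn{x-X_{t}}{\xi_{t+1}}$ is the sign convention consistent with the lemma as stated.
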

\paragraph*{Dual Dynamic}
The expression given in Lemma \ref{Lem:aaiisshdggdhgdhsssss} possesses a dependency on the dual variable $\Lambda_{t}$. So to continue, it stands clear to analyze the dynamic of this variable. Toward this direction, we have the following result on the drift of the Lagrangian:
\begin{lemma}
	\label{Lem:aaiisshdggdhgdhsssss2}
	For $\tilde{x}\in\mathcal{Q}_{\tau}$:
	\begin{align*}
	\Delta\mathcal{E}_{\tau}^{(2)}\leq&\gamma \inn{\left[ \nabla (h\circ g_{\tau})(X_{\tau})\right]^{\T} \Lambda_{\tau}}{X_{\tau}-\tilde{x} }\\
	&-(\alpha\gamma-\alpha^{2}\gamma^{2})\norm{\Lambda_{\tau}}_{2}^{2}+\gamma^{2}C_{3}^{2},\nonumber
	\end{align*}
	where $C_{3}>0$ is a constant satisfying:
	\begin{equation}
	\label{Eq:asjsjskdhdggddfsssddd}
	\norm{h(g(x))}_{2}\leq C_{3},\quad\forall x\in\mathcal{X}.
	\end{equation}
\end{lemma}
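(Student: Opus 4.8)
The plan is to expand the (squared) dual energy through the projected dual update and then trade the bilinear coupling between $\Lambda_\tau$ and the constraint values for the gradient inner product appearing in the claim — the one that is the exact negative of the coupling term in Lemma~\ref{Lem:aaiisshdggdhgdhsssss}. Reading $\mathcal{E}_\tau^{(2)}=\tfrac{1}{2}\norm{\Lambda_\tau}_2^2$, I first abbreviate $u_\tau:=(1-\alpha\gamma)\Lambda_\tau+\gamma\, h(g_\tau(X_\tau))$, so that \eqref{Eq:Dual} reads $\Lambda_{\tau+1}=\Pi_{\real^{R}_{\geq 0}}[u_\tau]$. Since $0\in\real^{R}_{\geq 0}$ and the Euclidean projection onto a closed convex set is nonexpansive, taking the second argument equal to $0$ gives $\norm{\Lambda_{\tau+1}}_2=\norm{\Pi[u_\tau]-\Pi[0]}_2\leq\norm{u_\tau}_2$, whence $\Delta\mathcal{E}_\tau^{(2)}\leq\tfrac12\norm{u_\tau}_2^2-\tfrac12\norm{\Lambda_\tau}_2^2$.

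Next I would expand $\tfrac12\norm{u_\tau}_2^2$ into the three pieces $\tfrac12(1-\alpha\gamma)^2\norm{\Lambda_\tau}_2^2$, $\gamma(1-\alpha\gamma)\inn{\Lambda_\tau}{h(g_\tau(X_\tau))}$, and $\tfrac{\gamma^2}{2}\norm{h(g_\tau(X_\tau))}_2^2$. Subtracting $\tfrac12\norm{\Lambda_\tau}_2^2$ turns the quadratic coefficient into $\tfrac12[(1-\alpha\gamma)^2-1]=-\alpha\gamma+\tfrac12\alpha^2\gamma^2$, which is bounded above by $-(\alpha\gamma-\alpha^2\gamma^2)$; the last piece is bounded by $\tfrac{\gamma^2}{2}C_3^2\leq\gamma^2 C_3^2$ using \eqref{Eq:asjsjskdhdggddfsssddd}. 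These two estimates already produce the $-(\alpha\gamma-\alpha^2\gamma^2)\norm{\Lambda_\tau}_2^2$ and $\gamma^2 C_3^2$ terms of the claim, so everything reduces to controlling the cross term.

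The crux is converting $\gamma(1-\alpha\gamma)\inn{\Lambda_\tau}{h(g_\tau(X_\tau))}$ into $\gamma\inn{[\nabla(h\circ g_\tau)(X_\tau)]^{\T}\Lambda_\tau}{X_\tau-\tilde{x}}$. Because $h\geq 0$ and $\Lambda_\tau\geq 0$ componentwise, the inner product $\inn{\Lambda_\tau}{h(g_\tau(X_\tau))}$ is nonnegative, so with $0\leq 1-\alpha\gamma\leq 1$ I may replace the factor $(1-\alpha\gamma)$ by $1$ without reversing the inequality. Then, since each $g_\tau^{(r)}$ is convex and $h$ is convex and nondecreasing, the composition $h\circ g_\tau^{(r)}$ is convex, and its subgradient inequality yields $h(g_\tau^{(r)}(X_\tau))\leq h(g_\tau^{(r)}(\tilde{x}))+\inn{\nabla(h\circ g_\tau^{(r)})(X_\tau)}{X_\tau-\tilde{x}}$. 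Multiplying by $\Lambda_\tau^{(r)}\geq 0$ and summing over $r$ gives $\inn{\Lambda_\tau}{h(g_\tau(X_\tau))}\leq\inn{[\nabla(h\circ g_\tau)(X_\tau)]^{\T}\Lambda_\tau}{X_\tau-\tilde{x}}+\sum_r\Lambda_\tau^{(r)}h(g_\tau^{(r)}(\tilde{x}))$. As $\tilde{x}\in\mathcal{Q}_\tau$ is feasible, $g_\tau(\tilde{x})\leq 0$, and monotonicity of $h$ together with $h(0)\leq 0$ forces $h(g_\tau^{(r)}(\tilde{x}))\leq 0$, so the trailing sum drops, leaving exactly the desired term. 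Crucially, the subgradient $\nabla(h\circ g_\tau^{(r)})(X_\tau)$ is the same one used in the primal update, so this inner product is the exact negative of the coupling in Lemma~\ref{Lem:aaiisshdggdhgdhsssss} and will cancel when the two drifts are summed.

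I expect the only genuinely delicate bookkeeping to be the handling of the $(1-\alpha\gamma)$ factor on the cross term: passing from $(1-\alpha\gamma)$ to $1$ is legitimate only because $h\geq 0$ makes the inner product nonnegative and because $\alpha\gamma\leq 1$, so these nonnegativity/step-size conditions must be in force (they are consistent with the running choice $h=[\,\cdot\,]_+^p$). The remaining steps — nonexpansiveness of the projection, the square expansion, and the two a~priori bounds — are routine, and the feasibility of $\tilde{x}\in\mathcal{Q}_\tau$ is precisely what lets the extra slack term $\sum_r\Lambda_\tau^{(r)}h(g_\tau^{(r)}(\tilde{x}))$ be discarded.
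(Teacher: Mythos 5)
Your proof follows essentially the same route as the paper's (which splits the argument into its Lemmas \ref{Lem:aiashshsgdgdssdd} and \ref{Lem:aiashshsgdgdssdd2}): nonexpansiveness of $\Pi_{\real^{R}_{\geq 0}}$, expansion of the squared dual norm, and conversion of the cross term $\inn{\Lambda_{\tau}}{h(g_{\tau}(X_{\tau}))}$ into the gradient inner product via convexity of $h\circ g_{\tau}^{(r)}$, nonnegativity of $\Lambda_{\tau}$, and feasibility of $\tilde{x}\in\mathcal{Q}_{\tau}$. The only divergence is bookkeeping: you expand around the grouping $(1-\alpha\gamma)\Lambda_{\tau}+\gamma h(g_{\tau}(X_{\tau}))$, which leaves a factor $(1-\alpha\gamma)$ on the cross term that you can discard only by additionally invoking $h\geq 0$ (so that $\inn{\Lambda_{\tau}}{h(g_{\tau}(X_{\tau}))}\geq 0$), whereas the paper groups the update as $\Lambda_{\tau}+\gamma\bigl(h(g_{\tau}(X_{\tau}))-\alpha\Lambda_{\tau}\bigr)$, so the cross term appears with coefficient exactly $\gamma$ and the $\alpha^{2}\gamma^{2}\norm{\Lambda_{\tau}}_{2}^{2}$ is instead absorbed from the quadratic term via $\gamma^{2}\norm{h(g_{\tau}(X_{\tau}))-\alpha\Lambda_{\tau}}_{2}^{2}\leq 2\gamma^{2}\bigl(C_{3}^{2}+\alpha^{2}\norm{\Lambda_{\tau}}_{2}^{2}\bigr)$, with no sign assumption on $h$ needed at that step. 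This is harmless for the intended choices $h=[\cdot]_{+}^{p}$, but worth noting since the lemma's stated hypotheses on $h$ are only monotonicity and subdifferentiability.
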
 
For ease of the readibility, we provide the proof of this Lemma in Appendix \ref{Subsec: AppProof}.
\paragraph*{Primal-Dual Dynamic}
By combining previous auxiliary statements on the dynamic of the primal - and dual variable, we obtain the following result:
\begin{theorem}
	\label{Thm:aapsjfgezzegfddd}
Suppose that:
\begin{equation}
\label{Eq:aaiisshsdggdgdfdffdddffff}
\alpha-\gamma(\alpha^{2}-\tfrac{ C_{1}^{2}}{K})\geq 0
\end{equation}
For $\mathbbm{x}_{t}:=(x_{\tau})_{\tau\in [t]}\subset\mathcal{X}$ with $x_{\tau}\in\mathcal{Q}_{\tau}$ for all $\tau\in [t]$:
	\begin{align*}
	&\Gap^{\d}_{t}(\mathbbm{x}_{t})+\tfrac{\norm{\Lambda_{t+1}}_{2}^{2}}{2\gamma}\leq -\tfrac{\mathcal{V}_{t}(\mathbbm{x}_{t})}{\gamma}\\
	&+\tfrac{\norm{\Lambda_{1}}_{2}^{2}}{2\gamma}+t\gamma C^{2}_{\psi}+S_{t}(\mathbbm{x}_{t})+\tfrac{2\gamma}{K}R_{t},
	\end{align*}
where:
\begin{align*}
S_{t}(\mathbbm{u}_{t})&=\sum_{\tau=1}^{t}\tilde{\xi}_{\tau+1}(u_{\tau}),\quad R_{t}=\sum_{\tau=1}^{t}\norm{\xi_{\tau+1}}_{*}^{2},\\
&\mathcal{V}^{1}_{t}(\mathbbm{x}_{t}):=\sum_{\tau=1}^{t}\Delta\mathcal{E}_{\tau}^{1}(x_{\tau}),\quad 
C_{\psi}^{2}:=\tfrac{2C_{2,\psi}^{2}}{K}+C_{3,\psi}^{2},
\end{align*}
\end{theorem}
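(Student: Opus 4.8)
The plan is to convert the two one-step drift estimates of Lemmas~\ref{Lem:aaiisshdggdhgdhsssss} and~\ref{Lem:aaiisshdggdhgdhsssss2} into a single aggregate inequality by summing each over $\tau\in[t]$ and adding; the whole point of the saddle-point construction is that the bilinear primal--dual coupling then cancels, leaving precisely the quantities in the claim.

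First I would invoke Lemma~\ref{Lem:aaiisshdggdhgdhsssss} with the time-varying comparator $x=x_{\tau}$ at each step and sum over $\tau$. Since $\mathcal{V}^{1}_{t}(\mathbbm{x}_{t})=\sum_{\tau=1}^{t}\Delta\mathcal{E}^{1}_{\tau}(x_{\tau})$ by definition, the left-hand side is exactly $\mathcal{V}^{1}_{t}(\mathbbm{x}_{t})$; on the right I group $-\gamma\sum_{\tau}\inn{X_{\tau}-x_{\tau}}{\nabla f_{\tau}(X_{\tau})}=-\gamma\Gap^{\d}_{t}(\mathbbm{x}_{t})$, the martingale piece $\gamma\sum_{\tau}\tilde{\xi}_{\tau+1}(x_{\tau})=\gamma S_{t}(\mathbbm{x}_{t})$, the noise piece $\tfrac{2\gamma^{2}}{K}R_{t}$, the constant $\tfrac{2\gamma^{2}t}{K}C_{2,\psi}^{2}$, and the residual coupling and $\norm{\Lambda_{\tau}}_{2}^{2}$ contributions. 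Separately I would apply Lemma~\ref{Lem:aaiisshdggdhgdhsssss2} with $\tilde{x}=x_{\tau}$, which is admissible precisely because the hypothesis guarantees $x_{\tau}\in\mathcal{Q}_{\tau}$, and sum over $\tau$; here the left-hand side telescopes, since $\mathcal{E}^{2}_{\tau}=\norm{\Lambda_{\tau}}_{2}^{2}/2$, giving $\tfrac{1}{2}(\norm{\Lambda_{t+1}}_{2}^{2}-\norm{\Lambda_{1}}_{2}^{2})$.

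Adding the two aggregated inequalities is where the decisive cancellation occurs: the primal coupling sum $-\gamma\sum_{\tau}\inn{X_{\tau}-x_{\tau}}{[\nabla(h\circ g_{\tau})(X_{\tau})]^{\T}\Lambda_{\tau}}$ and the dual coupling sum $+\gamma\sum_{\tau}\inn{[\nabla(h\circ g_{\tau})(X_{\tau})]^{\T}\Lambda_{\tau}}{X_{\tau}-x_{\tau}}$ coincide up to sign by symmetry of the inner product, so they annihilate one another. The deterministic constants merge into $t\gamma^{2}(2C_{2,\psi}^{2}/K+C_{3}^{2})=t\gamma^{2}C_{\psi}^{2}$. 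What is left to dispose of is the aggregate quadratic dual term $\sum_{\tau}\norm{\Lambda_{\tau}}_{2}^{2}$, whose net coefficient is obtained by adding the $+\tfrac{\gamma^{2}C_{1}^{2}}{K}$ contribution from the primal estimate to the $-(\alpha\gamma-\alpha^{2}\gamma^{2})$ contribution from the dual estimate; this is exactly the expression regulated by hypothesis~\eqref{Eq:aaiisshsdggdgdfdffdddffff}, under which it is non-positive, so the entire block may be discarded. Dividing through by $\gamma$ and transferring $\Gap^{\d}_{t}(\mathbbm{x}_{t})$ together with $\norm{\Lambda_{t+1}}_{2}^{2}/(2\gamma)$ to the left then reproduces the stated bound verbatim.

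The one genuinely delicate point is this final sign bookkeeping---verifying that the combined coefficient of $\sum_{\tau}\norm{\Lambda_{\tau}}_{2}^{2}$ is neutralised by~\eqref{Eq:aaiisshsdggdgdfdffdddffff}; everything else is telescoping and routine regrouping. The conceptual heart, however, is the exact annihilation of the bilinear coupling, which is what allows the dynamic gap and the dual energy $\norm{\Lambda_{t+1}}_{2}^{2}/(2\gamma)$ to be controlled by one and the same inequality.
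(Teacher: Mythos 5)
Your proof is correct and follows essentially the same route as the paper's own: sum the two drift lemmas over $\tau\in[t]$, let the bilinear primal--dual coupling cancel, invoke condition \eqref{Eq:aaiisshsdggdgdfdffdddffff} to discard the aggregated $\norm{\Lambda_{\tau}}_{2}^{2}$ block, telescope $\mathcal{E}^{2}$ into $\tfrac{1}{2}(\norm{\Lambda_{t+1}}_{2}^{2}-\norm{\Lambda_{1}}_{2}^{2})$, and divide by $\gamma$. (One caveat you inherit from the paper: the combined coefficient is $-\gamma(\alpha-\gamma\alpha^{2}-\gamma C_{1}^{2}/K)$, whose non-positivity actually requires $\alpha-\gamma(\alpha^{2}+C_{1}^{2}/K)\geq 0$, not the sign printed in \eqref{Eq:aaiisshsdggdgdfdffdddffff} --- an apparent typo in the hypothesis rather than a flaw in your argument.)
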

\begin{proof}
From Lemma \ref{Lem:aaiisshdggdhgdhsssss} and Lemma \ref{Lem:aaiisshdggdhgdhsssss2}, we obtain for any $\tilde{x}\in\mathcal{Q}_{t}$:
\begin{align*}
&\Delta\mathcal{E}_{t}^{1}(\tilde{x})+\Delta\mathcal{E}_{t}^{2}\\
&\leq -\gamma\inn{X_{t}-\tilde{x}}{\nabla f_{t}(X_{t})}+\gamma \tilde{\xi}_{t+1}+\tfrac{2\gamma^{2}}{K}(C_{2}^{2}+\norm{\xi_{t+1}}^{2}_{*})\\
&\underbrace{-\gamma(\alpha-\gamma\alpha^{2}-\tfrac{\gamma C_{1}^{2}}{K})\norm{\Lambda_{t}}_{2}^{2}}_{=:e_{1}}+\gamma^{2} C_{3}^{2}.
\end{align*}
The condition \eqref{Eq:aaiisshsdggdgdfdffdddffff} help us to get rid of the expression $e1$, which involves the dual variable. By summing the resulted inequality and since $2\sum_{\tau=1}^{t}\Delta\mathcal{E}_{t}^{2}=\norm{\Lambda_{t+1}}^{2}_{2}-\norm{\Lambda_{1}}^{2}_{2}$, we obtain the desired statement.
\end{proof}
By the relation \eqref{Eq:aaiisshhjdhdggshhsss}, $\Gap_{t}^{\d}(\mathbbm{x}^{*}_{t})$ gives rise to the dynamic regret. Moreover, Lemma \ref{Lem:aauissshdhdhhdjsss} asserts that the Lagrangian variable contains the information about the cumulation of the constraint violation. Thus we come closer to achieving the objective of providing performance guarantee for the proposed algorithm. As usual, the terms $S_{t}$ and $R_{t}$ due to objective feedback noise can be handled by taking the expectation. So, the only term at which a closer look should be taken is $\mathcal{V}^{1}_{t}(\mathbbm{x}_{t}^{*})$.

\paragraph*{Lower bound for Primal Energy Function}
In case that that the environment is not adversary, i.e., $f_{\tau}$ remains for all $\tau\in [t]$ the same, it holds by telescoping: $$\mathcal{V}_{t}^{1}(\mathbbm{x}_{t}^{*})=F(x^{*},Y_{t+1})-F(x^{*},Y_{1})\geq -F(x^{*},Y_{1}),$$ where $x^{*}$ denotes the constrained minimizer of $f_{\tau}$. What we may do in the adversary case is to interpolate $\mathcal{V}_{t}^{1}(\mathbbm{x}_{t}^{*})$ by the cumulative difference of the benchmark sequence $\mathbbm{x}_{t}$. In order to execute this procedure, we assume the following:
\begin{assum}
The regularizer is nowhere steep in the sense that $\psi$ is differentiable on $\X$. 
%
\end{assum}
Before we proceed, we first discuss this assumption in the following:
\begin{remark}
Suppose that $\mathcal{X}=\lrbrace{x\in\real^{D}_{\geq 0}:~\sum_{i=1}^{D}x_{i}\leq B}$ for a fixed constant $B>0$. The Euclidean norm seen as a regularizer on $\X$ is clearly nowhere steep. In contrast to the Euclidean norm, the entropy function $\psi(x)=\sum_{i=1}^{D}x_{i}\ln(x_{i})$ as a regularizer is not nowhere steep since the gradient of $\psi$ grows unboundedly as the argument goes to the element of $\mathcal{X}$ which possesses zero coordinates. However, we may instead use the smoothed entropy $\psi_{\epsilon}(x)=\psi(x+\epsilon)$ where $\epsilon>0$ is a chosen constant. As we will discuss later This procedure does not have any significant impact on the dynamic of our algorithm.
\end{remark}
	
We first show that is the regularizer is nowhere steep then the Fenchel coupling is Lipschitz in the first argument:
\begin{lemma}
\label{Lem:aaususggdhhdgdd}
Suppose that $\psi$ is nowhere steep. Then for all $x_{1},x_{2}\in\X$ and $y\in \real^{D}$ :
\begin{equation*}
\abs{F_{\psi}(x_{1},y)-F_{\psi}(x_{2},y)}\leq 2L_{\psi}\norm{x_{1}-x_{2}},
\end{equation*}
where $L_{\psi}>0$ is given by:
\begin{equation}
\label{Eq:aiashjshsshgdgddggdhddd}
 L_{\psi}:=\sup_{x\in\X}\norm{\nabla\psi(x)}_{*}.
\end{equation}
\end{lemma}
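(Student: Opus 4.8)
The plan is to reduce everything to a statement about the gradient $\nabla\psi$, whose norm is uniformly controlled by $L_{\psi}$, and then to invoke the mean value theorem. First I would expand the two Fenchel couplings directly from the definition. Since the conjugate value $\psi^{*}(y)$ does not depend on the first argument, it cancels in the difference, leaving
\begin{equation*}
F_{\psi}(x_{1},y)-F_{\psi}(x_{2},y)=\psi(x_{1})-\psi(x_{2})-\inn{y}{x_{1}-x_{2}}.
\end{equation*}
Thus the entire task is to bound the right-hand side by $2L_{\psi}\norm{x_{1}-x_{2}}$, and the only genuinely troublesome piece is the linear term $\inn{y}{x_{1}-x_{2}}$.

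The next step is to trade the dual vector $y$ for the gradient of $\psi$ evaluated at the mirror image $p:=\Phi(y)\in\X$. Because $\psi$ is nowhere steep it is differentiable on $\X$, and $p$ is the maximizer of $x\mapsto\inn{y}{x}-\psi(x)$ over $\X$, so it satisfies the first-order optimality condition $\inn{y-\nabla\psi(p)}{x-p}\leq 0$ for every $x\in\X$. I would use this variational inequality to replace $\inn{y}{x_{1}-x_{2}}$ by $\inn{\nabla\psi(p)}{x_{1}-x_{2}}$, i.e. to identify the Fenchel-coupling difference with the Bregman-type difference $\psi(x_{1})-\psi(x_{2})-\inn{\nabla\psi(p)}{x_{1}-x_{2}}$ (the replacement is exact when $p$ is interior, where $y=\nabla\psi(p)$). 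I expect this exchange to be the main obstacle: $y$ is a priori an arbitrary element of $\real^{D}$, so the boundary terms produced by the projection inside $\Phi$ must be shown not to spoil the estimate, and it is precisely here that nowhere-steepness, hence boundedness of $\nabla\psi$ on all of $\X$, is indispensable.

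Once the difference is written as $\psi(x_{1})-\psi(x_{2})-\inn{\nabla\psi(p)}{x_{1}-x_{2}}$, the conclusion follows by a short computation. By the mean value theorem there is a point $\xi$ on the segment $[x_{2},x_{1}]$, which lies in $\X$ by convexity, with $\psi(x_{1})-\psi(x_{2})=\inn{\nabla\psi(\xi)}{x_{1}-x_{2}}$, so the difference equals $\inn{\nabla\psi(\xi)-\nabla\psi(p)}{x_{1}-x_{2}}$. Hölder's inequality for the primal--dual norm pairing bounds its absolute value by $\norm{\nabla\psi(\xi)-\nabla\psi(p)}_{*}\norm{x_{1}-x_{2}}$, and the triangle inequality together with $\xi,p\in\X$ yields $\norm{\nabla\psi(\xi)-\nabla\psi(p)}_{*}\leq\norm{\nabla\psi(\xi)}_{*}+\norm{\nabla\psi(p)}_{*}\leq 2L_{\psi}$. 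Combining the last two displays gives $\abs{F_{\psi}(x_{1},y)-F_{\psi}(x_{2},y)}\leq 2L_{\psi}\norm{x_{1}-x_{2}}$, which is the claimed bound.
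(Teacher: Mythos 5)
Your route is essentially the paper's: both arguments reduce the claim to bounding $\psi(x_{1})-\psi(x_{2})-\inn{y}{x_{1}-x_{2}}$, control the first piece via the mean value theorem and $L_{\psi}$, and control the linear term by trading $y$ for the gradient of $\psi$ at $p=\Phi(y)$. The only cosmetic difference is that you fold everything into the single pairing $\inn{\nabla\psi(\xi)-\nabla\psi(p)}{x_{1}-x_{2}}$ where the paper applies the triangle inequality and bounds the two pieces separately; the constant $2L_{\psi}$ comes out the same either way.

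However, the step you yourself single out as ``the main obstacle'' --- replacing the arbitrary $y\in\real^{D}$ by $\nabla\psi(p)$ --- is never actually closed in your write-up, and it cannot be closed. The optimality condition for $p=\Phi(y)$ gives only $\inn{y-\nabla\psi(p)}{x-p}\leq 0$ for $x\in\X$, i.e.\ $y$ equals $\nabla\psi(p)$ plus an element of the normal cone to $\X$ at $p$; when $p$ lies on the boundary that normal component is unbounded, and the variational inequality controls its pairing only with differences of the form $x-p$, one-sidedly --- not with a general $x_{1}-x_{2}$. Concretely, take $\X=[0,1]$, $\psi(x)=x^{2}/2$ (so $L_{\psi}=1$) and $y=100$: then $F_{\psi}(0,y)-F_{\psi}(1,y)=\psi(0)-\psi(1)+y=99.5$, while $2L_{\psi}\abs{0-1}=2$. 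So the exchange genuinely fails for $y$ outside the range of $\nabla\psi$, and no handling of ``boundary terms'' rescues it. To be fair, the paper's own proof makes exactly the same leap: it invokes surjectivity of $\Phi$ to produce $x=\Phi(y)$ and then asserts $y=\nabla\psi(x)$, which is valid only when the subdifferential at $x$ is the singleton $\lrbrace{\nabla\psi(x)}$, i.e.\ when the normal cone at $x$ is trivial. The gap is therefore inherited from the statement itself: as written, the lemma holds only for $y$ in the image of $\nabla\psi$ (or after restricting $y$ to the score vectors one can actually show land there), not for every $y\in\real^{D}$.
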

\begin{proof}
By definition of $F$ and the triangle inequality, we have:
\begin{equation*}
\begin{split}
\abs{F(x_{1},y)-F(x_{2},y)}&\leq \abs{\psi(x_{1})-\psi(x_{2})}+\abs{\inn{y}{x_{1}-x_{2}}}
\end{split}
\end{equation*}
Mean value Theorem and the nowhere-steepness of $\psi$ asserts:
\begin{equation*}
\abs{\psi(x_{1})-\psi(x_{2})}\leq L_{\psi}\norm{x_{1}-x_{2}}.
\end{equation*}
Now, since $\psi$ is nowhere steep, it follows from Proposition \ref{Prop:aiaishshjfggfhdhddd} that $\Phi$ is surjective. So we can find a $x\in\mathcal{X}$ s.t. $x=\Phi(y)$ and thus (again by Proposition \ref{Prop:aiaishshjfggfhdhddd}) $y=\nabla \psi(x)$. Consequently we have by H\"older inequality:
\begin{equation*}
\abs{\inn{y}{x_{1}-x_{2}}}\leq \norm{\nabla\psi(x)}_{*}\norm{x_{1}-x_{2}}\leq L_{\psi}\norm{x_{1}-x_{2}}. 
\end{equation*}
\end{proof}
We are now ready to give a lower bound for $\mathcal{V}^{1}_{t}$:
\begin{lemma}
Suppose that $\psi$ is nowhere steep. It holds:
\begin{equation*}
\begin{split}
\mathcal{V}_{t}^{(1)}(\mathbbm{x}_{t})\geq-F(x_{1},Y_{1})-L_{\psi}\Vari^{\psi}(\mathbbm{x}_{t+1}),
\end{split}
\end{equation*}
where $L_{\psi}$ is given in \eqref{Eq:aiashjshsshgdgddggdhddd}:
\begin{equation*}
\Vari^{\psi}(\mathbbm{x}_{t+1}):=\sum_{\tau=1}^{t}\norm{\bx_{\tau+1}^{*}-\bx_{\tau}^{*}}
\end{equation*}
\end{lemma}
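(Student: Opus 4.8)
The plan is to rewrite $\mathcal{V}_t^{(1)}(\mathbbm{x}_t)=\sum_{\tau=1}^t\Delta\mathcal{E}_\tau^1(x_\tau)=\sum_{\tau=1}^t\bigl(F(x_\tau,Y_{\tau+1})-F(x_\tau,Y_\tau)\bigr)$ as a telescoping sum plus an error controlled by the variation of the benchmark sequence. The obstruction to telescoping directly is that the first argument $x_\tau$ changes between summands, so the term $F(x_\tau,Y_{\tau+1})$ does not cancel against $F(x_{\tau+1},Y_{\tau+1})$ coming from the next summand.

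To repair this, I would first shift the comparison point in the positive half of each summand, writing $F(x_\tau,Y_{\tau+1})=F(x_{\tau+1},Y_{\tau+1})+\bigl(F(x_\tau,Y_{\tau+1})-F(x_{\tau+1},Y_{\tau+1})\bigr)$ and bounding the parenthesized discrepancy via Lemma \ref{Lem:aaususggdhhdgdd}, whose hypothesis (nowhere-steepness of $\psi$) is exactly what is assumed here; this gives $\abs{F(x_\tau,Y_{\tau+1})-F(x_{\tau+1},Y_{\tau+1})}\leq 2L_\psi\norm{x_{\tau+1}-x_\tau}$. Summing over $\tau$, the reindexing $\sum_{\tau=1}^t F(x_{\tau+1},Y_{\tau+1})=\sum_{\tau=2}^{t+1}F(x_\tau,Y_\tau)$ makes the interior terms cancel against $\sum_{\tau=1}^t F(x_\tau,Y_\tau)$, leaving only the boundary terms $F(x_{t+1},Y_{t+1})-F(x_1,Y_1)$.

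Collecting the pieces yields $\mathcal{V}_t^{(1)}(\mathbbm{x}_t)\geq F(x_{t+1},Y_{t+1})-F(x_1,Y_1)-L_\psi\sum_{\tau=1}^t\norm{x_{\tau+1}-x_\tau}$, up to the constant furnished by the Lipschitz lemma. Finally I would discard the nonnegative boundary term $F(x_{t+1},Y_{t+1})\geq 0$, using that the Fenchel coupling takes values in $\real_{\geq 0}$, and recognize $\sum_{\tau=1}^t\norm{x_{\tau+1}-x_\tau}$ as $\Vari^{\psi}(\mathbbm{x}_{t+1})$, which delivers the claimed bound.

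The main obstacle I anticipate is essentially bookkeeping: arranging the index shift so that the telescoping is exact and the residual produced by each summand lines up with a single increment $\norm{x_{\tau+1}-x_\tau}$ of the variation, rather than a shifted or doubled one. The analytic content is carried entirely by Lemma \ref{Lem:aaususggdhhdgdd}, which converts the non-cancelling part of the sum into the variation of the optimizer path $\mathbbm{x}_{t+1}$.
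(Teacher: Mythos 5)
Your proposal is correct and follows essentially the same route as the paper's proof: the same add-and-subtract of $F(x_{\tau+1},Y_{\tau+1})$ inside each summand, the same appeal to Lemma \ref{Lem:aaususggdhhdgdd} to control the discrepancy by $\norm{x_{\tau+1}-x_{\tau}}$, the same telescoping, and the same discarding of the nonnegative boundary term $F(x_{t+1},Y_{t+1})$. You are in fact slightly more careful than the paper on one point: Lemma \ref{Lem:aaususggdhhdgdd} furnishes the constant $2L_{\psi}$, whereas the paper's proof (and the lemma statement) silently drops the factor of $2$; your phrase ``up to the constant furnished by the Lipschitz lemma'' correctly flags this.
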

\begin{proof}
Applying Lemma \ref{Lem:aaususggdhhdgdd}, we obtain:
\begin{equation*}
\begin{split}
&\Delta\mathcal{E}_{\tau}^{(1)}(x_{\tau})=F(x_{\tau},Y_{\tau+1})-F(x_{\tau},Y_{\tau})\\
&=
F(x_{\tau+1},Y_{\tau+1})-F(x_{\tau},Y_{\tau})\\
&~~~+F(x_{\tau},Y_{\tau+1})-F(x_{\tau+1},Y_{\tau+1})\\
&\geq F(x_{\tau+1},Y_{\tau+1})-F(x_{\tau},Y_{\tau})- L_{\psi}\norm{x_{\tau+1}-x_{\tau}}
\end{split}
\end{equation*}
Thus summing over $\tau\in [t]$:
\begin{equation*}
\begin{split}
\mathcal{V}_{t}^{(1)}(\mathbbm{x}_{t+1})&\geq F(x_{t+1},Y_{t+1})-F(x_{1},Y_{1})-L_{\psi}\Vari_{t}(\mathbbm{x}_{t+1})\\
&\geq-F(x_{1},Y_{1})-L_{\psi}\Vari_{t}(\mathbbm{x}_{t+1})
\end{split}
\end{equation*}
\end{proof}
\subsection{Dynamic Regret bound}
\begin{theorem}
	\label{Thm:aaoosjshdggdhhdggshhsss}
	Suppose that $\psi$ is nowhere steep and that \eqref{Eq:aaiisshsdggdgdfdffdddffff} is fulfilled.
For $\mathbbm{u}_{t}:=(u_{\tau})_{\tau\in [t]}\subset\mathcal{X}$ with $u_{\tau}\in\mathcal{Q}_{\tau}$ for all $\tau\in [t]$:	
\begin{equation*}
\begin{split}
\Erw[\Gap^{\d}_{t}(\mathbbm{x}_{t})]&\leq \tfrac{F(x_{1},Y_{1})}{\gamma}+L_{\psi}\tfrac{\mathbb{V}(\mathbbm{x}_{t})}{\gamma}+\tfrac{\norm{\Lambda_{1}}_{2}^{2}}{2\gamma}+t\gamma C_{\psi}^{2}\\
&+\tfrac{2\gamma}{K}\sum_{\tau=1}^{t}\sigma_{t+1}^{2},
\end{split}
\end{equation*}
where:
\begin{equation*}
C_{\psi}^{2}:=\tfrac{2C_{2,\psi}^{2}}{K}+C_{3,\psi}^{2},
\end{equation*}
and:
\begin{equation*}
\Erw[\norm{\xi_{\tau}}_{*}^{2}]\leq \sigma_{\tau}^{2}.
\end{equation*}
\end{theorem}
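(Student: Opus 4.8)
The plan is to derive the claim from the Primal--Dual Dynamic estimate of Theorem~\ref{Thm:aapsjfgezzegfddd} by passing to expectations and bounding its three stochastic ingredients separately. Concretely, I would start from
\begin{align*}
\Gap^{\d}_{t}(\mathbbm{x}_{t})+\tfrac{\norm{\Lambda_{t+1}}_{2}^{2}}{2\gamma}\leq{}& -\tfrac{\mathcal{V}^{1}_{t}(\mathbbm{x}_{t})}{\gamma}+\tfrac{\norm{\Lambda_{1}}_{2}^{2}}{2\gamma}\\
&+t\gamma C^{2}_{\psi}+S_{t}(\mathbbm{x}_{t})+\tfrac{2\gamma}{K}R_{t},
\end{align*}
drop the nonnegative term $\norm{\Lambda_{t+1}}_{2}^{2}/(2\gamma)$ from the left-hand side, and then take $\Erw[\cdot]$ of the resulting inequality. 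This reduces the task to controlling $\Erw[\mathcal{V}^{1}_{t}]$, $\Erw[S_{t}]$, and $\Erw[R_{t}]$.

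For the feedback-noise terms, I expect $\Erw[S_{t}(\mathbbm{x}_{t})]=0$. Writing $S_{t}(\mathbbm{x}_{t})=\sum_{\tau=1}^{t}\tilde\xi_{\tau+1}(u_{\tau})$ and recalling that each summand is affine in the martingale difference $\xi_{\tau+1}$ with coefficients depending only on the $\mathcal{F}_{\tau}$-measurable data $X_{\tau}$ and the hindsight benchmark $u_{\tau}$, conditioning on $\mathcal{F}_{\tau}$ and using $\Erw[\xi_{\tau+1}\mid\mathcal{F}_{\tau}]=0$ annihilates every term; the tower property then gives $\Erw[S_{t}]=0$. For the quadratic term I would use the variance bound $\Erw[\norm{\xi_{\tau}}_{*}^{2}]\le\sigma_{\tau}^{2}$ to obtain $\Erw[R_{t}]=\sum_{\tau=1}^{t}\Erw[\norm{\xi_{\tau+1}}_{*}^{2}]\le\sum_{\tau=1}^{t}\sigma_{\tau+1}^{2}$, which supplies the last summand of the stated bound.

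For the primal energy variation I would invoke the pathwise lower bound $\mathcal{V}^{1}_{t}(\mathbbm{x}_{t})\ge -F(x_{1},Y_{1})-L_{\psi}\Vari(\mathbbm{x}_{t})$ from the lemma preceding the theorem (applicable since $\psi$ is nowhere steep). As this holds for every realization and $F(x_{1},Y_{1})$ is deterministic, it transfers verbatim to expectation, giving $-\Erw[\mathcal{V}^{1}_{t}]/\gamma\le\big(F(x_{1},Y_{1})+L_{\psi}\Vari(\mathbbm{x}_{t})\big)/\gamma$. Substituting the three estimates into the expectation of the displayed inequality and collecting terms produces exactly the asserted bound.

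The delicate step is the martingale cancellation $\Erw[S_{t}]=0$: it hinges on the benchmark $u_{\tau}$ being free of any dependence on the future noise $\xi_{\tau+1}$. Since the dynamic comparators are constrained minimizers of the deterministic per-slot losses, they are $\mathcal{F}_{\tau}$-predictable and this causality is automatic, so the conditional expectation factorizes cleanly; everything else is the routine telescoping already absorbed into Theorem~\ref{Thm:aapsjfgezzegfddd}.
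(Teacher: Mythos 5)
Your proposal is correct and follows essentially the same route as the paper: start from the primal--dual bound of Theorem~\ref{Thm:aapsjfgezzegfddd}, discard the nonnegative $\norm{\Lambda_{t+1}}_{2}^{2}/(2\gamma)$ term, lower-bound $\mathcal{V}^{1}_{t}$ via the nowhere-steepness lemma, and take expectations using the martingale property of $S_{t}$ and the variance bound on $R_{t}$. Your explicit remark that the cancellation $\Erw[S_{t}]=0$ requires the comparators $u_{\tau}$ to be independent of the future noise is a point the paper only gestures at ("one can check that $S_{t}$ is a martingale"), but the substance of the argument is identical.
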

\begin{proof}
First notice that $\norm{\Lambda_{t}}_{2}\geq 0$.
Combining this with \eqref{Eq:aaiisshsdggdgdfdffdddffff}, it holds:	
\begin{equation}
\label{Eq:aausushdgdggdhhssssdd}
\begin{split}
\Gap^{\d}_{t}(\mathbbm{x}_{t})&\leq \tfrac{F(x_{1}^{*},Y_{1})}{\gamma}+\tfrac{L_{\psi}\mathbb{V}_{t}}{\gamma}+\tfrac{\norm{\Lambda_{1}}_{2}^{2}}{2\gamma}+\gamma t C_{\psi}^{2}\\
&+S_{t}(\mathbbm{x}_{t})+\tfrac{2\gamma}{K}R_{t}.
\end{split}
\end{equation}
Now, one can check that $S_{t}(\mathbbm{x}_{t})$, $t\in \nat$ is a martingale. Consequently $\Erw[S_{t}(\mathbbm{x}_{t})]=\Erw[\inn{X_{1}-x_{1}}{\xi_{2}}]=\Erw[\inn{X_{1}-x_{1}}{\Erw[\xi_{2}|\mathcal{F}_{1}]}]=0$. So taking the expectation over \eqref{Eq:aausushdgdggdhhssssdd}, we obtain the desired statement. 

\end{proof}
\begin{corollary}
Suppose that the requirements of Theorem \ref{Thm:aaoosjshdggdhhdggshhsss} are fulfilled and suppose that in addition $Y_{1}=0$ and $\Lambda_{0}=0$. Moreover suppose that the noise is persistent in the sense that there exists $\sigma>0$ s.t. $\Erw[\norm{\xi_{\tau}}_{*}^{2}]\leq \sigma^{2}$ for all $\tau$. With:
\begin{equation*}
\gamma=\Theta(T^{-1/2}),
\end{equation*}
it holds:
\begin{equation*}
\Erw[\Reg_{t}^{\d}]\leq\left[ \mathcal{D}(\X,\psi)+L_{\psi}\mathbb{V}_{t}\right]\mathcal{O}(\sqrt{T})+ (C_{\psi}^{2}+\tfrac{\sigma^{2}}{K})t\mathcal{O}(T^{-1/2})
\end{equation*}
where:
\begin{equation*}
\mathcal{D}(\X,\psi)=\sup_{x\in\X}\psi(x)-\inf_{x\in\X}\psi(x)\quad C_{\psi}^{2}=C_{2,\psi}^{2}+C_{3,\psi}^{2}
\end{equation*}
\end{corollary}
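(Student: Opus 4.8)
The plan is to read off the corollary as a direct specialization of Theorem~\ref{Thm:aaoosjshdggdhhdggshhsss}, so that essentially no new machinery is required. First I would apply that theorem with the benchmark taken to be the dynamic optimum $\mathbbm{x}_{t}^{*}=(x_{\tau}^{*})_{\tau\in[t]}$; this is admissible because each $x_{\tau}^{*}$ is by definition a minimizer of $f_{\tau}$ over $\mathcal{Q}_{\tau}$ and hence lies in $\mathcal{Q}_{\tau}$. The convexity relation \eqref{Eq:aaiisshhjdhdggshhsss}, namely $\Gap_{t}^{\d}(\mathbbm{x}_{t}^{*})\ge\Reg_{t}^{\d}$, then lets me pass from the gap bound of the theorem to a bound on the dynamic regret, giving $\Erw[\Reg_{t}^{\d}]\le\Erw[\Gap_{t}^{\d}(\mathbbm{x}_{t}^{*})]$, i.e.\ the right-hand side of the theorem with $\mathbb{V}_{t}=\mathbb{V}(\mathbbm{x}_{t}^{*})$.

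Next I would simplify the terms of that bound under the stated initialization. Initializing the dual variable at zero makes the initial-dual contribution $\norm{\Lambda_{1}}_{2}^{2}/(2\gamma)$ vanish. The choice $Y_{1}=0$ collapses the Fenchel-coupling term: by the definition of $F$ and of the convex conjugate, $F(x_{1}^{*},0)=\psi(x_{1}^{*})+\psi^{*}(0)=\psi(x_{1}^{*})-\inf_{x\in\X}\psi(x)\le\sup_{x\in\X}\psi(x)-\inf_{x\in\X}\psi(x)=\mathcal{D}(\X,\psi)$, where I use $\psi^{*}(0)=\sup_{x\in\X}(-\psi(x))=-\inf_{x\in\X}\psi(x)$. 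Finally, persistence of the noise gives $\sum_{\tau=1}^{t}\sigma_{\tau+1}^{2}\le t\sigma^{2}$, so the stochastic term is at most $2\gamma t\sigma^{2}/K$. (The passage to expectation itself needs no further work, since the martingale term $S_{t}$ was already shown to have zero mean inside the proof of Theorem~\ref{Thm:aaoosjshdggdhhdggshhsss}.)

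Collecting these, the bound reads $\Erw[\Reg_{t}^{\d}]\le\tfrac{1}{\gamma}\big[\mathcal{D}(\X,\psi)+L_{\psi}\mathbb{V}_{t}\big]+\gamma t\big[C_{\psi}^{2}+\tfrac{2\sigma^{2}}{K}\big]$, and substituting $\gamma=\Theta(T^{-1/2})$ — so that $1/\gamma=\mathcal{O}(\sqrt{T})$ and $\gamma=\mathcal{O}(T^{-1/2})$ — yields the claimed two-term rate after absorbing the fixed multiplicative constants into the $\mathcal{O}(\cdot)$ notation. I do not expect any genuine obstacle: the only real computation is the conjugate identity $\psi^{*}(0)=-\inf_{x\in\X}\psi(x)$, and the sole bookkeeping subtlety is reconciling the theorem's constant $C_{\psi}^{2}=\tfrac{2C_{2,\psi}^{2}}{K}+C_{3,\psi}^{2}$ with the looser $C_{\psi}^{2}=C_{2,\psi}^{2}+C_{3,\psi}^{2}$ quoted in the corollary, which is harmless since both enter only through the big-$\mathcal{O}$.
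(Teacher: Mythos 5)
Your proposal is correct and follows essentially the same route as the paper: specialize Theorem \ref{Thm:aaoosjshdggdhhdggshhsss} to the benchmark $\mathbbm{x}_{t}^{*}$, pass from gap to regret via \eqref{Eq:aaiisshhjdhdggshhsss}, use $\Lambda_{1}=0$ and the conjugate identity $\psi^{*}(0)=-\inf_{x\in\X}\psi(x)$ to bound $F(x_{1}^{*},0)\le\mathcal{D}(\X,\psi)$, and absorb the constants under $\gamma=\Theta(T^{-1/2})$. Your handling of the Fenchel-coupling term is in fact slightly cleaner than the paper's (which has a sign typo writing $-\psi^{*}(0)$), and your remark that the mismatch between the theorem's and the corollary's $C_{\psi}^{2}$ is harmless under the big-$\mathcal{O}$ is the right reading.
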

\begin{proof}
By the relation \eqref{Eq:aaiisshhjdhdggshhsss}, it follows that the upper bound for the gap given 	By the assumption $Y_{1}=0$, it holds:
\begin{equation*}
F(x_{1},Y_{1})=\psi(x_{1})-\psi^{*}(0)=\psi(x_{1})-\inf_{x\in\X}\psi(x)\leq\mathcal{D}(\X,\psi)
\end{equation*}
Previous observations and the assumption $\Lambda_{1}=0$ and the asummption that the noise is persistent yields:
\begin{equation*}
\begin{split}
\Erw[\Reg^{\d}_{t}]&\leq \tfrac{\mathcal{D}(\X,\psi)+L_{\psi}\mathbb{V}{t}}{\gamma}+\left(  C_{\psi}^{2}+\tfrac{2\sigma^{2}}{K}\right)t\gamma ,
\end{split}
\end{equation*}
\end{proof}
So from above result, we have that $\mathbb{E}[\Reg_{T}^{\d}]$ is of order $\mathcal{O}((1+\mathbb{V}_{t}+\sigma)\sqrt{T})$ in case that the online environment changes slowly in the sense that $\mathbb{V}_{T}\leq\mathcal{O}(T^{p})$ where $p<1/2$, the expected regret is sublinear.
\subsection{Constraint Violation Analysis}
Requirements:
\begin{equation}
\label{Eq:aaoisoshdhdggdhdd}
\norm{\nabla f_{t}(x)}_{*}\leq L_{f},\quad\forall x\in\mathcal{X},t
\end{equation}
\begin{equation*}
\bigcap_{\tau\in [t]}\mathcal{Q}_{\tau}\neq \emptyset.
\end{equation*}
\eqref{Eq:aaoisoshdhdggdhdd} asserts that for $x\in\mathcal{X}$:
\begin{equation*}
-\inn{X_{t}-x}{\nabla f_{t}(X_{t})}\leq\norm{X_{t}-x}\norm{\partial f_{t}(X_{t})}_{*}\leq D_{\X} L_{f}
\end{equation*}

\begin{theorem}
	For any  $\tilde{x}\in\bigcap_{\tau\in [t]}\lrbrace{g_{\tau}\leq 0}$, it holds:
\begin{align*}
\tfrac{\Erw[\norm{\Lambda_{t+1}}_{2}^{2}]}{2}
&\leq\gamma t \mathcal{D}_{\X}L_{f}+F(\tilde{x},Y_{1})+\tfrac{\norm{\Lambda_{1}}_{2}^{2}}{2}+t\gamma^{2} C_{\psi}^{2}\\
&+\tfrac{2\gamma^{2}\sum_{\tau=1}^{t}\sigma_{\tau+1}^{2}}{K}
\end{align*}
\end{theorem}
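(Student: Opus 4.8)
The plan is to instantiate the master inequality of Theorem~\ref{Thm:aapsjfgezzegfddd} at the \emph{constant} benchmark sequence $x_{\tau}=\tilde{x}$ for every $\tau\in[t]$, and then to dispose of each remaining term using non-negativity of the Fenchel coupling together with the gradient bound \eqref{Eq:aaoisoshdhdggdhdd}. First I would check admissibility: since $\tilde{x}\in\bigcap_{\tau\in[t]}\lrbrace{g_{\tau}\leq 0}$, the constant choice satisfies $x_{\tau}=\tilde{x}\in\mathcal{Q}_{\tau}$ for all $\tau$, so Theorem~\ref{Thm:aapsjfgezzegfddd} applies and gives
\begin{equation*}
\tfrac{\norm{\Lambda_{t+1}}_{2}^{2}}{2\gamma}\leq -\Gap^{\d}_{t}(\mathbbm{x}_{t})-\tfrac{\mathcal{V}_{t}(\mathbbm{x}_{t})}{\gamma}+\tfrac{\norm{\Lambda_{1}}_{2}^{2}}{2\gamma}+t\gamma C^{2}_{\psi}+S_{t}(\mathbbm{x}_{t})+\tfrac{2\gamma}{K}R_{t}.
\end{equation*}

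The crucial simplification is that with a constant comparator the primal energy sum telescopes: $\mathcal{V}_{t}(\mathbbm{x}_{t})=\sum_{\tau=1}^{t}\Delta\mathcal{E}^{1}_{\tau}(\tilde{x})=F(\tilde{x},Y_{t+1})-F(\tilde{x},Y_{1})$, whence $-\mathcal{V}_{t}(\mathbbm{x}_{t})\leq F(\tilde{x},Y_{1})$ since $F\geq 0$. For the gap term I would invoke \eqref{Eq:aaoisoshdhdggdhdd}: by H\"older's inequality $-\inn{X_{\tau}-\tilde{x}}{\nabla f_{\tau}(X_{\tau})}\leq\norm{X_{\tau}-\tilde{x}}\,\norm{\nabla f_{\tau}(X_{\tau})}_{*}\leq \mathcal{D}_{\X}L_{f}$ for each $\tau$, so summing yields $-\Gap^{\d}_{t}(\mathbbm{x}_{t})\leq t\,\mathcal{D}_{\X}L_{f}$. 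Substituting both estimates and multiplying through by $\gamma$ produces the deterministic bound
\begin{equation*}
\tfrac{\norm{\Lambda_{t+1}}_{2}^{2}}{2}\leq \gamma t\,\mathcal{D}_{\X}L_{f}+F(\tilde{x},Y_{1})+\tfrac{\norm{\Lambda_{1}}_{2}^{2}}{2}+t\gamma^{2}C^{2}_{\psi}+\gamma S_{t}(\mathbbm{x}_{t})+\tfrac{2\gamma^{2}}{K}R_{t}.
\end{equation*}

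It then remains to take expectations, and the only point requiring care is the noise term $S_{t}(\mathbbm{x}_{t})$. Because $\tilde{x}$ is deterministic, the summands of $S_{t}$ are of the form $\inn{X_{\tau}-\tilde{x}}{\xi_{\tau+1}}$ with $X_{\tau}$ being $\mathcal{F}_{\tau}$-measurable and $\Erw[\xi_{\tau+1}\mid\mathcal{F}_{\tau}]=0$; hence $S_{t}(\mathbbm{x}_{t})$ is a martingale with $\Erw[S_{t}(\mathbbm{x}_{t})]=0$, exactly as argued in the proof of Theorem~\ref{Thm:aaoosjshdggdhhdggshhsss}. For the last term I would use $\Erw[R_{t}]=\sum_{\tau=1}^{t}\Erw[\norm{\xi_{\tau+1}}_{*}^{2}]\leq\sum_{\tau=1}^{t}\sigma_{\tau+1}^{2}$. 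Taking expectation in the displayed inequality and inserting these two facts yields the claimed bound.

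The main obstacle here is bookkeeping rather than conceptual. One must verify that the single fixed point $\tilde{x}$ is feasible in \emph{every} $\mathcal{Q}_{\tau}$, which is precisely why the hypothesis $\tilde{x}\in\bigcap_{\tau}\lrbrace{g_{\tau}\leq 0}$ (and the nonemptiness requirement $\bigcap_{\tau}\mathcal{Q}_{\tau}\neq\emptyset$) is imposed, and one must confirm that $S_{t}$ retains its martingale-difference structure when the comparator is constant and deterministic. Once the telescoping of $\mathcal{V}_{t}$ at a constant benchmark is recognized and the gap is bounded below via \eqref{Eq:aaoisoshdhdggdhdd}, the remaining manipulations are direct.
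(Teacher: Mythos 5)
Your proposal is correct and follows essentially the same route as the paper's own proof: instantiating Theorem~\ref{Thm:aapsjfgezzegfddd} at the constant comparator $x_{\tau}=\tilde{x}$, telescoping $\mathcal{V}_{t}^{1}$ to $-F(\tilde{x},Y_{1})$, bounding $-\Gap^{\d}_{t}$ by $t\,\mathcal{D}_{\X}L_{f}$ via \eqref{Eq:aaoisoshdhdggdhdd}, and killing $S_{t}$ and $R_{t}$ in expectation. Your write-up is in fact slightly more careful than the paper's, since you explicitly justify why $S_{t}$ remains a mean-zero martingale for a deterministic constant comparator.
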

\begin{proof}
We have for any $\mathbbm{x}_{t}\subset\mathcal{X}$:
\begin{equation*}
-\Gap^{\d}_{t}(\mathbbm{x}_{t})=-\sum_{\tau=1}^{t}\inn{X_{\tau}-x_{\tau}}{\nabla f_{\tau}(X_{\tau})}\leq t D_{\X} L_{f},
\end{equation*}
and for $\mathbbm{x}_{t}\subset\mathcal{X}$ with $x_{\tau}=\tilde{x}\in\mathcal{X}$ for all $\tau$:
\begin{equation*}
\mathcal{V}_{t}^{1}(\mathbbm{x}_{t})=F(\tilde{x},Y_{t+1})-F(\tilde{x},Y_{1})\geq -F(\tilde{x},Y_{1})
\end{equation*}	

Combining this with Theorem 	\ref{Thm:aapsjfgezzegfddd}, it holds for $\mathbbm{x}_{t}\subset\mathcal{X}$ with $x_{\tau}=\tilde{x}\in\bigcap_{\tau}\mathcal{Q}_{\tau}$ for all $\tau$:
\begin{align*}
\tfrac{\Erw[\norm{\Lambda_{t+1}}_{2}^{2}]}{2}&\leq-\gamma\Erw[\Gap^{\d}_{t}(\mathbbm{x}_{t})]+F(\tilde{x},Y_{1})\\
&+\tfrac{\norm{\Lambda_{1}}_{2}^{2}}{2}+t\gamma^{2} C_{\psi}^{2}+\tfrac{2\gamma^{2}\sum_{\tau=1}^{t}\sigma_{\tau+1}^{2}}{K},\\
&\leq\gamma t \mathcal{D}_{\X}L_{f}+F(\tilde{x},Y_{1})\\
&+\tfrac{\norm{\Lambda_{1}}_{2}^{2}}{2}+t\gamma^{2} C_{\psi}^{2}+\tfrac{2\gamma^{2}\sum_{\tau=1}^{t}\sigma_{\tau+1}^{2}}{K}
\end{align*}
\end{proof}
\begin{corollary}
	Suppose that $Y_{1}=0$ and $\Lambda_{1}=0$. For $\gamma=\Theta(T^{-1/2})$ and $\alpha=\Theta(T^{-1/2})$ fulfilling \eqref{Eq:aaiisshsdggdgdfdffdddffff},  
It holds:
\begin{equation*}
\begin{split}
\hCFit_{t}^{r}\leq&\sqrt{\mathcal{D}(\X,\psi)}\mathcal{O}(T^{1/2})+\sqrt{D_{\X}  L_{f}}\mathcal{O}(T^{3/4}) \\
&+(C_{\psi}^{2}+\tfrac{2\sigma^{2}}{K})^{1/2}\mathcal{O}(T^{1/2})
\end{split}
\end{equation*}
\end{corollary}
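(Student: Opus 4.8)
The plan is to combine the deterministic reduction of Lemma~\ref{Lem:aauissshdhdhhdjsss} with the second-moment bound on the dual variable proved in the preceding Theorem, and then to convert that second-moment control into the required first-moment bound through Jensen's inequality. First I would invoke Lemma~\ref{Lem:aauissshdhdhhdjsss}, which under $\Lambda_{1}=0$ gives the pointwise estimate
\begin{equation*}
\hCFit_{t}^{r}\leq \tfrac{\norm{\Lambda_{t+1}}_{2}}{\gamma}+\alpha\sum_{\tau=1}^{t}\norm{\Lambda_{\tau}}_{2}.
\end{equation*}
Taking expectations, the whole claim reduces to controlling $\Erw[\norm{\Lambda_{\tau}}_{2}]$ uniformly over $\tau\in[t]$.

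Next I would feed in the preceding Theorem, applied for an arbitrary fixed $\tilde{x}\in\bigcap_{\tau\in[t]}\lrbrace{g_{\tau}\leq 0}$, which bounds $\Erw[\norm{\Lambda_{\tau+1}}_{2}^{2}]$. As in the regret corollary, the assumption $Y_{1}=0$ lets me replace $F(\tilde{x},Y_{1})$ by $\mathcal{D}(\X,\psi)$ (since $F(\tilde{x},0)=\psi(\tilde{x})-\inf_{x}\psi(x)\leq\mathcal{D}(\X,\psi)$); the assumption $\Lambda_{1}=0$ kills the term $\norm{\Lambda_{1}}_{2}^{2}/2$; and persistent noise $\Erw[\norm{\xi_{\tau}}_{*}^{2}]\leq\sigma^{2}$ replaces $\sum_{s}\sigma_{s+1}^{2}$ by $\tau\sigma^{2}$. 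Bounding the running index by $\tau\leq t\leq T$ then yields a single uniform bound
\begin{equation*}
\Erw[\norm{\Lambda_{\tau}}_{2}^{2}]\leq M:=2\gamma T D_{\X}L_{f}+2\mathcal{D}(\X,\psi)+2T\gamma^{2}\left(C_{\psi}^{2}+\tfrac{2\sigma^{2}}{K}\right),
\end{equation*}
valid for every $\tau\in[t]$, where I have already grouped the $C_{\psi}$- and $\sigma$-contributions (this is what produces the combined coefficient in the final bound).

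The last step passes from the second to the first moment. Jensen's inequality gives $\Erw[\norm{\Lambda_{\tau}}_{2}]\leq\sqrt{M}$, and subadditivity of the square root splits $\sqrt{M}$ into $\sqrt{2\gamma T D_{\X}L_{f}}$, $\sqrt{2\mathcal{D}(\X,\psi)}$, and $\gamma\sqrt{2T}\,(C_{\psi}^{2}+2\sigma^{2}/K)^{1/2}$. Inserting $\gamma=\Theta(T^{-1/2})$, the first is $\sqrt{D_{\X}L_{f}}\,\mathcal{O}(T^{1/4})$ while the other two are $\mathcal{O}(1)$ in $T$, each carrying its own constant. Feeding $\sqrt{M}$ back into the two pieces of the $\hCFit$ estimate — the term $\Erw[\norm{\Lambda_{t+1}}_{2}]/\gamma\leq\sqrt{M}/\gamma$ carries $\gamma^{-1}=\Theta(T^{1/2})$, and $\alpha\sum_{\tau=1}^{t}\Erw[\norm{\Lambda_{\tau}}_{2}]\leq\alpha t\sqrt{M}$ carries $\alpha t=\Theta(T^{1/2})$ — both yield the same order, giving
\begin{equation*}
\Erw[\hCFit_{t}^{r}]\leq\sqrt{\mathcal{D}(\X,\psi)}\,\mathcal{O}(T^{1/2})+\sqrt{D_{\X}L_{f}}\,\mathcal{O}(T^{3/4})+\left(C_{\psi}^{2}+\tfrac{2\sigma^{2}}{K}\right)^{1/2}\mathcal{O}(T^{1/2}),
\end{equation*}
which is the claimed bound.

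I expect the main difficulty to be bookkeeping rather than conceptual. I must make the second-moment bound genuinely uniform in $\tau$ before taking the square root — hence the replacement $\tau\leq T$ — so that the worst index does not spoil the rate, and I must track which physical constant each summand of $\sqrt{M}$ inherits through the concavity step so that the coefficients $\sqrt{\mathcal{D}(\X,\psi)}$, $\sqrt{D_{\X}L_{f}}$, and $(C_{\psi}^{2}+2\sigma^{2}/K)^{1/2}$ attach to the correct powers of $T$. The governing $\mathcal{O}(T^{3/4})$ rate comes precisely from the $D_{\X}L_{f}$-term, whose square root is $\mathcal{O}(T^{1/4})$ and which is then amplified by the $\Theta(T^{1/2})$ prefactors $\gamma^{-1}$ and $\alpha t$; the condition~\eqref{Eq:aaiisshsdggdgdfdffdddffff} is needed only upstream, to legitimize the preceding Theorem's second-moment bound.
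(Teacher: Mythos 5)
Your proposal is correct and follows essentially the same route as the paper: Lemma~\ref{Lem:aauissshdhdhhdjsss} to reduce $\hCFit^{r}_{t}$ to the dual norms, the preceding theorem's second-moment bound with $F(\tilde{x},0)\leq\mathcal{D}(\X,\psi)$, Jensen plus subadditivity of the square root, and then $\gamma^{-1}=\Theta(T^{1/2})$, $\alpha t=\mathcal{O}(T^{1/2})$ to land on the stated orders. The only (harmless) difference is bookkeeping — you uniformize the second-moment bound over $\tau\leq T$ before taking square roots, whereas the paper keeps the $t^{1/2}$ and $t^{3/2}$ factors and invokes $t\leq T$ only at the end — and you are in fact slightly more careful than the paper in stating the conclusion as a bound on $\Erw[\hCFit^{r}_{t}]$, which is what the argument actually delivers.
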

\begin{proof}
By the assumption $Y_{1}=0$, we have $F(\tilde{x},Y_{1})\geq \mathcal{D}(\X,\psi)$. So, it holds:
\begin{align*}
\tfrac{\Erw[\norm{\Lambda_{t+1}}_{2}^{2}]}{2}\leq&\mathcal{D}_{\X}L_{f}t\mathcal{O}(T^{-1/2})+\mathcal{D}(\X,\psi)\\
&+(C^{2}+\tfrac{2\sigma^{2}}{K})t\mathcal{O}(T^{-1})
\end{align*}
Consequently by Jensen's inequality:
\begin{align*}
\Erw[\norm{\Lambda_{t+1}}_{2}]\leq&\sqrt{\mathcal{D}(\X,\psi)} + \sqrt{D_{\X} L_{f}} t^{1/2}\mathcal{O}(T^{-1/4})\\&+(C_{\psi}^{2}+\tfrac{2\sigma^{2}}{K})^{1/2}t^{1/2}\mathcal{O}(T^{-1/2}).
\end{align*}
Consequently:
\begin{align*}
\tfrac{\Erw[\norm{\Lambda_{t+1}}_{2}]}{\gamma}&\leq\sqrt{D_{\X}L_{f}} t^{1/2}\mathcal{O}(T^{1/4})+\sqrt{\D(\X,\psi)}\mathcal{O}(T^{1/2})\\
&+(C_{\psi}^{2}+\tfrac{2\sigma^{2}}{K})^{1/2}t^{1/2}\mathcal{O}(1)
\end{align*}
Now, we have:
\begin{equation*}
\begin{split}
\alpha\sum_{\tau=1}^{t}\Erw[\norm{\Lambda_{\tau}}_{2}]&\leq\sqrt{\mathcal{D}(\X,\psi)}t\mathcal{O}(T^{-1/2})\\
&+\sqrt{D_{\X} L_{f}} \mathcal{O}(t^{3/2})\mathcal{O}(T^{-3/4})\\
&+(C_{\psi}^{2}+\tfrac{2\sigma^{2}}{K})^{1/2} \mathcal{O}(t^{3/2})\mathcal{O}(T^{-1}).
\end{split}
\end{equation*}
Consequently:
\begin{equation*}
\begin{split}
&\hCFit_{t}^{r}\leq\sqrt{\mathcal{D}(\X,\psi)}\left[\mathcal{O}(\sqrt{T})+t\mathcal{O}(T^{-1/2}) \right] \\
&+\sqrt{D_{\X}  L_{f}}\left[\sqrt{t}\mathcal{O}(T^{1/4})+\mathcal{O}(t^{3/2})\mathcal{O}(T^{-3/4})\right] \\
&+(C_{\psi}^{2}+\tfrac{2\sigma^{2}}{K})^{1/2}\left( \sqrt{t}\mathcal{O}(1) + \mathcal{O}(t^{3/2})\mathcal{O}(T^{-1})\right) 
\end{split}
\end{equation*}
Since $t\leq T$, the result follows.
\end{proof}
\section{Discussions on the parameters and constants}
\label{Sec:aaiissjhhdhdggdhdd}
This section aims to show the possibility of improving GOMSP by adapting the mirror map to the underlying feasible set. To this end, we compare the constants arising in the performance guarantees given in the previous section, both if the Euclidean norm -, and if the smoothed entropy serves as the regularizer. Throughout this section, we consider the constraint set: 
\begin{equation*}
\mathcal{X}=\lrbrace{x\in\real^{D}_{\geq 0}:~\sum_{i=1}^{D}x^{i}\leq B},\quad B\geq 1
\end{equation*}
\paragraph{$\mathcal{D}(\X,\psi)$ and $\mathcal{D}_{\X}^{\psi}$}  To compute $\mathcal{D}(\X,\psi^{\ent}_{\epsilon})$, notice first that $\psi_{\epsilon}$ is strictly convex and therefore the minimizer of this function is an extreme point of $\X$. Consequently, we have for $\epsilon\leq 1$, $\max_{x\in\mathcal{X}}\psi^{\ent}_{\epsilon}(x)=B\ln(B)$. Now, by KKT-argumentations, it yields for $\epsilon\geq e^{-1}$, $\min_{x\in\mathcal{X}}\psi^{\ent}_{\epsilon}(x)=-D\epsilon\ln(\epsilon)$. Combining both observations, we have $\mathcal{D}(\X,\psi^{\ent}_{\epsilon})=B\ln(B)+D\epsilon\ln(\epsilon)$.
In contrast, we have $\mathcal{D}(\X,\norm{\cdot}^{2}_{2}/2)=B^2$. So, using the smoothed entropy yields better dependency of $\mathcal{D}(\X,\psi)$ on $B$ ($B\ln(B)$ vs. $B^{2}$). However, $\mathcal{D}(\X,\psi)$ has a linear dependency on the $D$ which one fortunately can offset by choosing $\epsilon\in [e^{-1},1)$ large enough. The constant $\mathcal{D}_{\X}^{\psi}$ is irrelevant for our consideration, since it is equal $B$ for both choices of $\psi$.
\paragraph{$L_{\psi}$ and Sensitivity to Variation} Elementary computation yields $L_{\psi^{\ent}_{\epsilon}}= \max\lrbrace{\abs{1+\ln(\epsilon)},\abs{1+\ln(B+\epsilon)}}$. If $\epsilon\in [e^{-1},1]$, this quantity simplifies to $L_{\psi^{\ent}_{\epsilon}}= 1+\ln(B+\epsilon)$. In contrast, we have $L_{\norm{\cdot}_{2}}= B$. We see that choosing $\psi=\psi_{\epsilon}^{\ent}$ instead of $\psi=\norm{\cdot}_{2}^{2}/2$ might yields an improvement of the dependency of $L_{\psi}$ on $B$ ($\ln(B)$ vs. $B$) and therefore an improvement of the dependency of the GOMSP's regret performance on the variation. However, as a different choice of mirror map leads to a different norm measuring the variation, caution is required to this regard: With the choice $\psi=\psi_{\epsilon}^{\ent}$ we measure the variation by means of $\norm{\cdot}_{1}$, and with the Euclidean norm as regularizer, we measure the variation by means of $\norm{\cdot}_{2}$ which is in general smaller than $\norm{\cdot}_{1}$ (by at worst the factor $\sqrt{D}$). The discussion in this paragraph is irrelevant for the $\hCFit$ guarantee given in the previous section, since it is independent of the path variation.  
\paragraph{Constants related to loss function and penalty function}
Clearly, $C_{3,\psi}$ is equal in both choices of the regularizer. Since $\norm{\cdot}_{\infty}\leq\norm{\cdot}_{2}$,
$C_{1,\psi_{\epsilon}^{\ent}}$ might be smaller than $C_{1,\norm{\cdot}_{2}^{2}/2}$. Similar argumentation yields that $C_{1,\psi^{\ent}_{\epsilon}}$ might be smaller $C_{1,\norm{\cdot}^{2}_{2}/2}$.
\paragraph{Strong Convexity and Noise}\label{Sec:aaiissjhhdhdggdhddNoise} It is immediate to see that $K_{\norm{\cdot}_{2}}=1$. Moreover, by Proposition \ref{Prop:aiaishshjfggfhdhddd}, we have $K_{\psi_{\epsilon}}=\tfrac{1}{B}$. So in case $B>1$, GOMSP with $\psi_{\epsilon}^{\text{ent}}$ as the regularizer might suffer more from noise amplification than GOMSP with the Euclidean norm as regularizer. Our advice concerning this issue is to normalize as far as possible the problem such that the restated problem has $B=1$. Regarding the power $\sigma_{\psi}$ of the persistent noise itself, we can leverage from choosing the smoothed entropy over the Euclidean norm as the regularizer of GOMSP. To see this, consider, for instance, an i.i.d. noise $(\xi_{t})_{t}$ where the coordinates of $\xi_{t}$ are independent standard Gaussian random variables. It holds that $\sigma_{\norm{\cdot}_{2}^{2}/2}^{2}$ is of order $D$. In contrast, $\sigma_{\psi^{\text{\ent}}_{\epsilon}}^{2}$ is of order $\ln(D)$ which is better.   

\section{Numerical Simulation}
In order to verify our theoretical findings, we test GOMSP and present in this section the result of our simulations. We first begin by stating the setting in our experiment.
\subsection{Online Problem Setting}
We test our method on a special case of the problem setting stated in Example \ref{Ex:Eco} with $20$ generators ($D=20$) and $10$ constraints ($R=10$) described in the following:

\paragraph{Feasible Set} We consider the feasible set $\X=\lrbrace{x\in\real^{D}:\sum_{i=1}^{D}x_{i}\leq B}$ with $B=1$. The reason for choosing $B=1$ is to prevent possible noise amplification by using a regularizer other than the Euclidean norm (see paragraph $d)$ in Section \ref{Sec:aaiissjhhdhdggdhdd}). For other setting where $B\neq 1$ one may reformulate the online problem such that the resulted feasible set has $B=1$.

\paragraph{Loss Function}
We consider the quadratic cost function $c_{t}^{(i)}(x^{(i)})=a_{t}^{(i)}(x^{(i)})^{2}+b_{t}^{(i)}x^{(i)}$,
where $a^{(i)}_{t}=0.5\sin(\pi t/50)+5+\tilde{a}_{t}$, with $(\tilde{a}_{t})$ is an i.i.d. random sequence uniformly distributed in the interval $[0,0.5]$, and where $b^{(i)}_{t}=0.5\sin(\pi t/100)+6+\tilde{b}_{t}$, where $(\tilde{b}_{t})$ is an i.i.d. random sequence uniformly distributed in the interval $[0,0.2]$. We set the demand service constant to be $\xi=20$. Our model for the time-varying non-stationary demand is given as $d_{t}=0.1\cos(\pi t/125)+0.7+\tilde{d}_{t}$, where $(\tilde{d}_{t})$ is an i.i.d. random sequence uniformly distributed in the interval $[0,0.2]$. 

\paragraph{Constraints}
The constraints are described by the quadratic functions $E_{t}^{i\rightarrow j}(x^{(i)})=c^{i\rightarrow j} (x^{(i)})^{2}+e^{i\rightarrow j}x^{(i)}$ where $c^{i\rightarrow j}$ and $e^{i\rightarrow j}$ are independent uniformly distributed random variable on the unit interval. We assume that the constraint thresholds are time-variant and non-stationary of the form $E^{\max,j}_{t}=0.05\cos(\pi t/50)+0.2+\tilde{e}_{t}$, where $(\tilde{d}_{t})$ is an i.i.d. random sequence uniformly distributed in the interval $[0,1]$.    
\subsection{Algorithm setting and Benchmarks}
All the method which we apply to the online learning problem receives a warm start of the amount of $40$ time-slots. Subsequently, we run the algorithms for $T=500$. We test GOMSP on the online learning problem describe previously with both the smoothed entropy with $\epsilon=0.5$ and Euclidean norm as a regularizer, where we set the step size to be $\gamma=0.1/\sqrt{T}$ and the regularization parameter to be $\alpha=15\gamma$. As choices of $h$ we consider $h=[\cdot]_{+}$ and $h=[\cdot]_{+}^{2}$.
\paragraph{Noisy Feedback}
To model the disturbance of the gradient feedback, we assume that learner can only observe the cost coefficients $(a_{t}^{(i)})_{i}$ and $(b_{t}^{(i)})_{i}$ at time $t$ up to a Gaussian random disturbance. Specifically, we assume at time $t$ that the learner sees $(\hat{a}_{t}^{i})_{i}$ and $(\hat{b}_{t}^{i})_{i}$, where $\hat{a}_{t}^{(i)}=a_{t}^{(i)}+\tilde{\xi}_{t+1}^{(i),1}$ and $\hat{b}_{t}^{(i)}=a_{t}^{(i)}+\tilde{\xi}_{t+1}^{(i),2}$, with $(\tilde{\xi}_{t}^{(i),1})_{i,t}$ (resp. $(\tilde{\xi}_{t}^{(i),2})_{i,t}$) is the sequence of i.i.d. mean zero Gaussian random variable with standard deviation $\sigma_{a}>0$ ($\sigma_{b}>0$). Throughout our simulation, we set $\sigma_{a}=0.2$ and $\sigma_{b}=1$.  

\paragraph{MOSP}
We compare GOMSP with the modified online saddle-point (MOSP) introduced in \cite{Chen2017} with fixed primal and dual step size equal to $\gamma\in\lrbrace{0.1,0.07,0.05}/\sqrt{T}$. In contrast to the works \cite{Chen2017,Chen2018}, we simulate MOSP with imperfect gradient feedback with the noise structure described in the previous paragraph.  
\paragraph{ODG}
Furthermore, we also compare GOMSP with the stochastic dual gradient (SDG) method (see e.g., \cite{Tassiulas1992,Neely2010,Chen2017}), which we modify as follows: 
\begin{align*}
&X_{t+1}\in\argmin_{x\in\X}\hat{f}_{t}(x)+\inn{\Lambda_{t}}{g_{t}(x)}\\
&\Lambda_{t+1}=[\Lambda_{t}+\gamma g_{t}(X_{t})]_{+},
\end{align*} 
where $\hat{f}_{t}$ is the loss function with perturbed coefficients as described in Paragraph $a)$. This modification is for the sake of fairness in the comparison since the original SDG method requires non-causal knowledge and does not consider the possibilities of disturbance in the feedback. 
\subsection{Simulation Result}
\begin{figure}[h]
	\begin{center}
		\includegraphics[scale=0.6, trim={2.6cm 11cm 3.8cm 11.4cm}, clip]{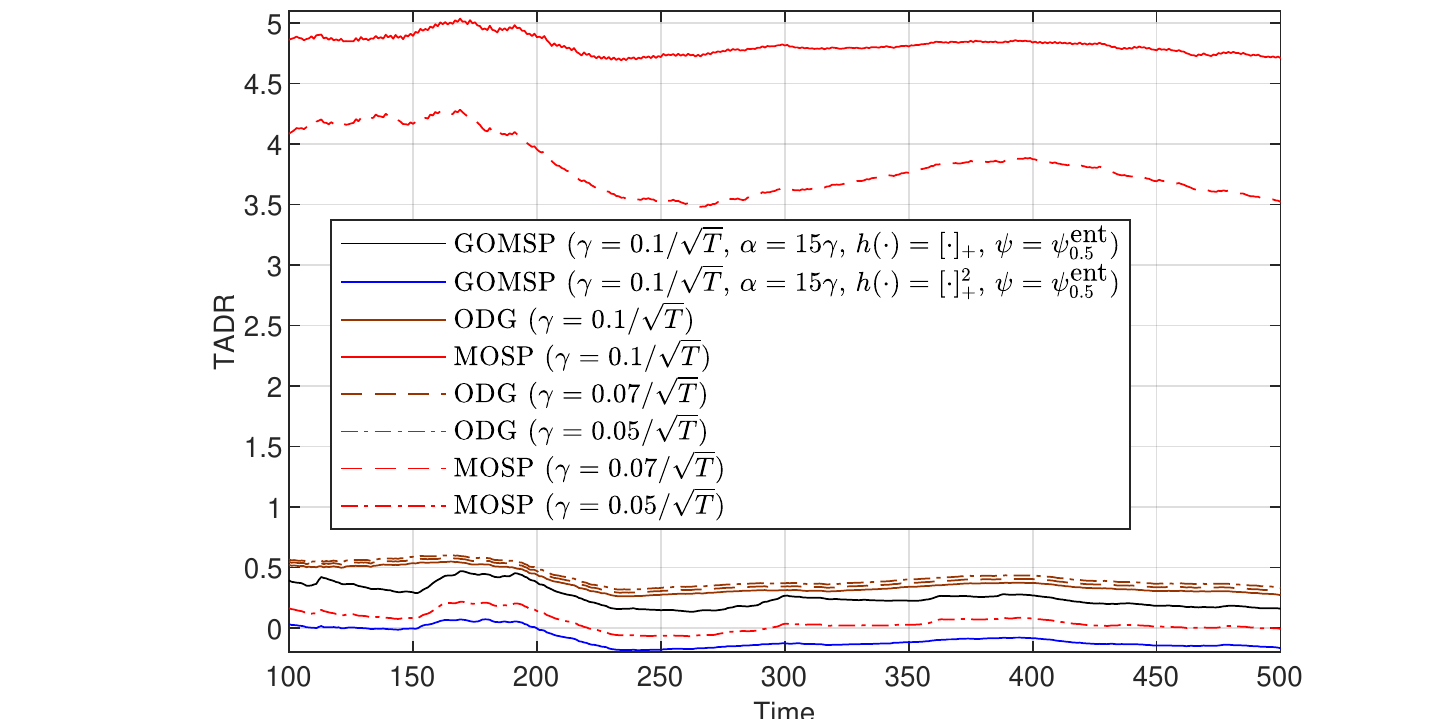}
	\end{center}
	\caption{Time-average Dynamic Regret (TADR) for GOSMP and benchmarks ODP and MOSP with perturbed cost $\sigma_{a}=0.2$ and $\sigma_{b}=1$.
	}
	\label{Fig:aoaojsjsjjddd2}
\end{figure}
\begin{figure}[h]
	\begin{center}
		\includegraphics[scale=0.65, trim={4cm 10cm 5cm 11.2cm}, clip]{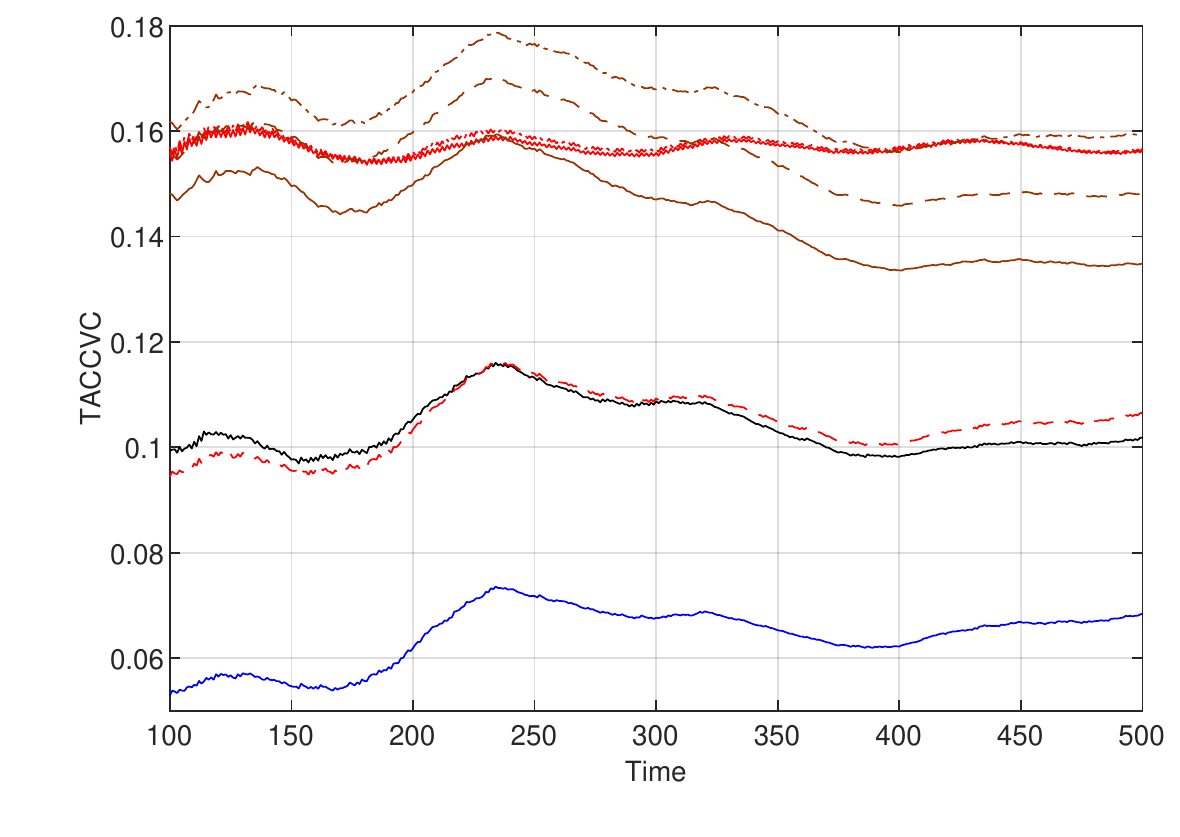}
	\end{center}
	\caption{Time-average clipped constraint violation (TACCV) for GOSMP and benchmarks ODP and MOSP with perturbed costs.
For legend see Fig. \ref{Fig:aoaojsjsjjddd2}.}
	\label{Fig:aoaojsjsjjddd1}
\end{figure}
\paragraph{Clipped Constraint Violation}
At first, we evaluate the Time average clipped constraint violation (TACCV) of the different methods given by:
\begin{equation*}
\tfrac{\sum_{t=1}^{t}\sum_{r=1}^{R}[g_{t}^{r}(X_{t})]_{+}}{t R}.
\end{equation*}
We see that in case the step sizes of the methods coincide ($\gamma=0.1$), GOMSP with smoothed entropy as the regularizer, independent of the choice of $h$, clearly outperform ODG and MOSP. However, we see that $h=[\cdot]_{+}^{2}$ yields the best performance. Moreover, even by reducing the step sizes of ODG and MOSP to $\gamma=0.07/\sqrt{T}$ and $\gamma=0.05/\sqrt{T}$ the corresponding TACCV is still higher than that of GOMSP.
\paragraph{Dynamic Regret}
Now we examine the dynamic regret of the methods averaged over time (TADR). We provide the plot of this quantity in Fig. \ref{Fig:aoaojsjsjjddd2}. Our method clearly outperform ODG w.r.t. to the performance measure TADR in the case where the step sizes of MOSP and its benchmarks coincide ($\gamma=0.1/\sqrt{T}$). However, running MOSP with smaller step size ($\gamma=0.05/\sqrt{T}$) it outperforms GOMSP with $h=[\cdot]_{t}$. This occurence can however be changed by choosing $h=[\cdot]_{+}^{2}$ since GOMSP possesses in this case the lowest and even negative dynamic regret.
\begin{figure}[h]
	\begin{center}
		\includegraphics[scale=0.7, trim={4cm 11cm 3.3cm 10cm}, clip]{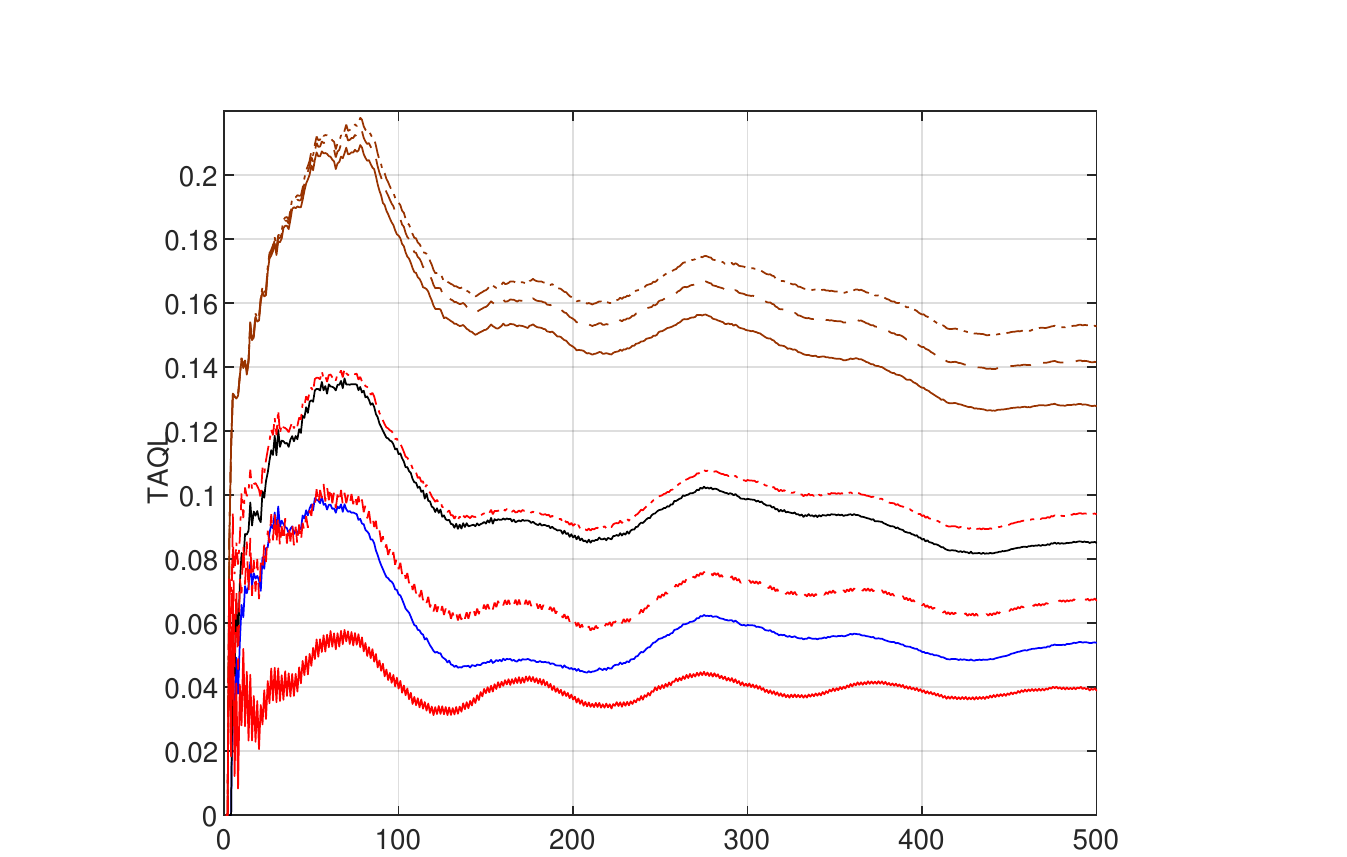}
	\end{center}
	\caption{Time-average Queue Length (TAQL) for GOSMP and benchmarks ODP and MOSP with perturbed cost $\sigma_{a}=0.2$ and $\sigma_{b}=1$. For legend see Fig. \ref{Fig:aoaojsjsjjddd2}.
	}
	\label{Fig:aoaojsjsjjddd3}
\end{figure}

\paragraph{Queue Length}
In our experiment, we also examine the queue length $(Q_{t})_{t}$ of GOMSP and its benchmarks, which is given by
\begin{equation*}
Q^{r}_{t+1}=[Q^{r}_{t}+g^{r}_{t}(X_{t})]_{+},
\end{equation*}
with $Q^{r}_{0}=0$. This quantity is relevant for applications where the current constraint violation can be compensated by previous actions that are strictly constraint fulfilling, which occurs in systems having the ability to buffer (see e.g. \cite{Chen2017}). Clearly, small TACCV does not imply small queue length since the former implies that the constraint violations remain small and the latter allows some substantial constraint violations of cost constraint values strictly smaller than the allowed threshold. We plot the time-average queue length (TAQL) $\sum_{r=1}^{R}Q_{t}^{r}/tR$ in Figure \ref{Fig:aoaojsjsjjddd3}. We see that MOSP with $\gamma=0.1/\sqrt{T}$ yields the lowest queue length. However, by observing its trajectory, this performance is caused by the fact that the update of MOSP highly and rapidly oscillates between states which strictly fulfilling the constraint and states violating the constraints. Such a behavior is not tolerable in technical applications since it might incur an additional switching cost (see e.g. \cite{Li2018}). Furthermore it is surprising in the face of the previous discussion on the difference between TACCVC and TAQL that ignoring the MOSP with $\gamma=0.1/\sqrt{T}$, it is possible that GOMSP may have the smallest TAQL.

\begin{figure}[h]
	\begin{center}
		\includegraphics[scale=0.6, trim={3.3cm 9cm 3.1cm 9cm}, clip]{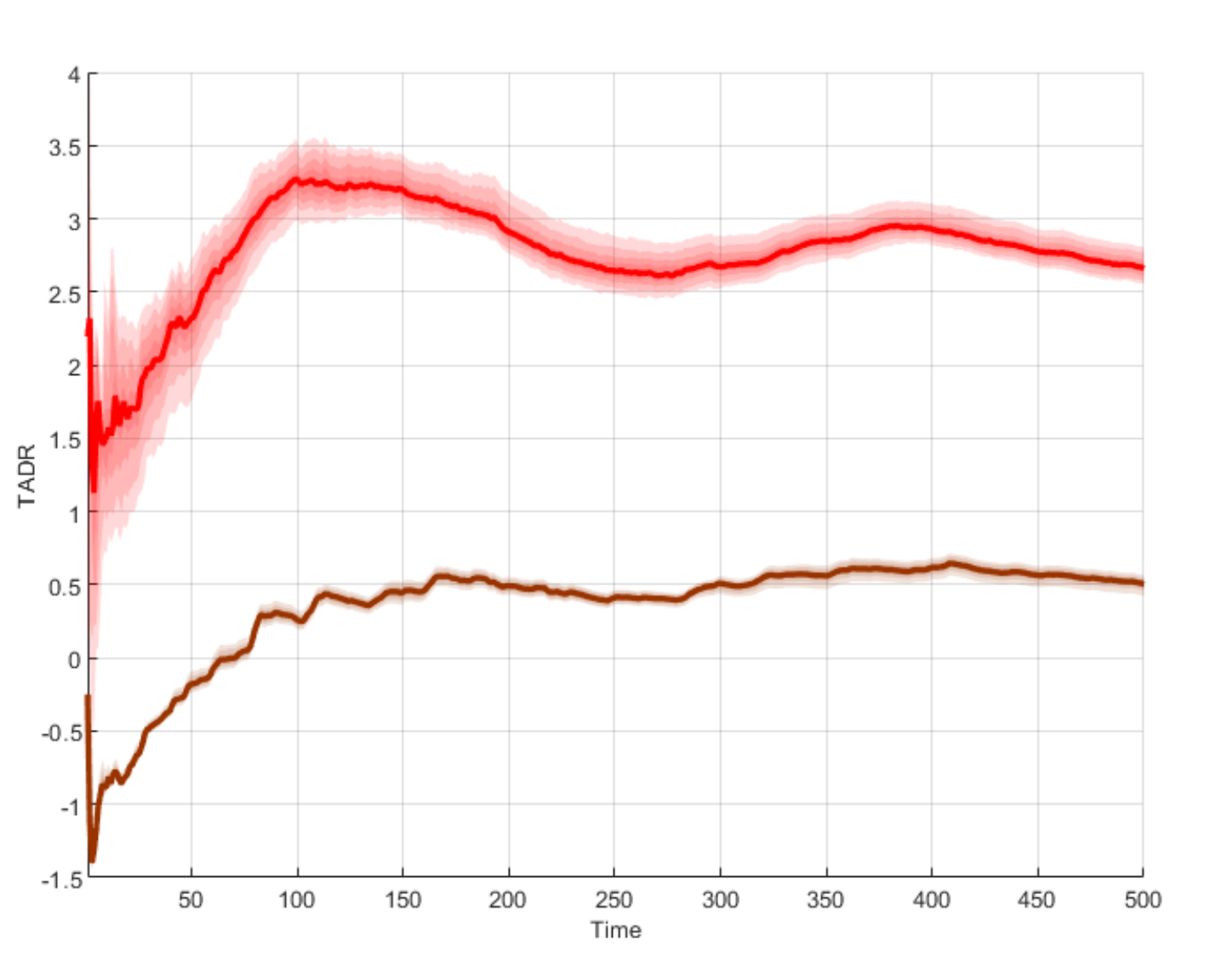}
	\end{center}
	\caption{TADR for GOSMP.
		Brown line corresponds to the sample average of TACCV in the smoothed entropy ($\epsilon=0.5$) case and red line resp. in the Euclidean case. Shaded areas are each corresponds to $25\%$-, $50\%$-, $75\%$-, and $90\%$-percentile.
	}
	\label{Fig:aoaojsjsjjddd4}
\end{figure}
\begin{figure}[h]
	\begin{center}
		\includegraphics[scale=0.9, trim={6cm 12cm 5cm 13cm}, clip]{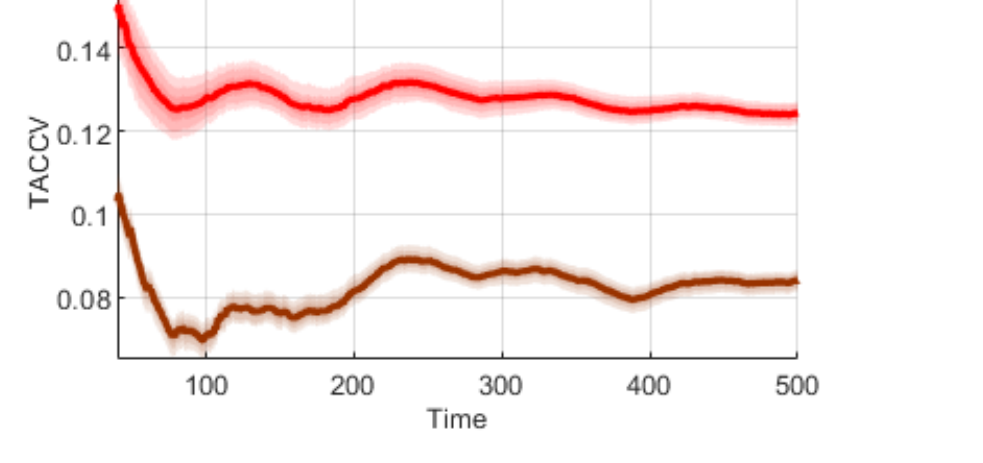}
	\end{center}
	\caption{TACCVC for GOSMP.
		Brown line corresponds to the sample average of TACCVC in the smoothed entropy ($\epsilon=0.5$) case and red line resp. in the Euclidean case. Shaded areas are each corresponds to $25\%$-, $50\%$-, $75\%$-, and $90\%$-percentile.
	}
	\label{Fig:aoaojsjsjjddd5}
\end{figure}
\begin{figure}[h]
	\begin{center}
		\includegraphics[scale=0.7, trim={6cm 12cm 5cm 12cm}, clip]{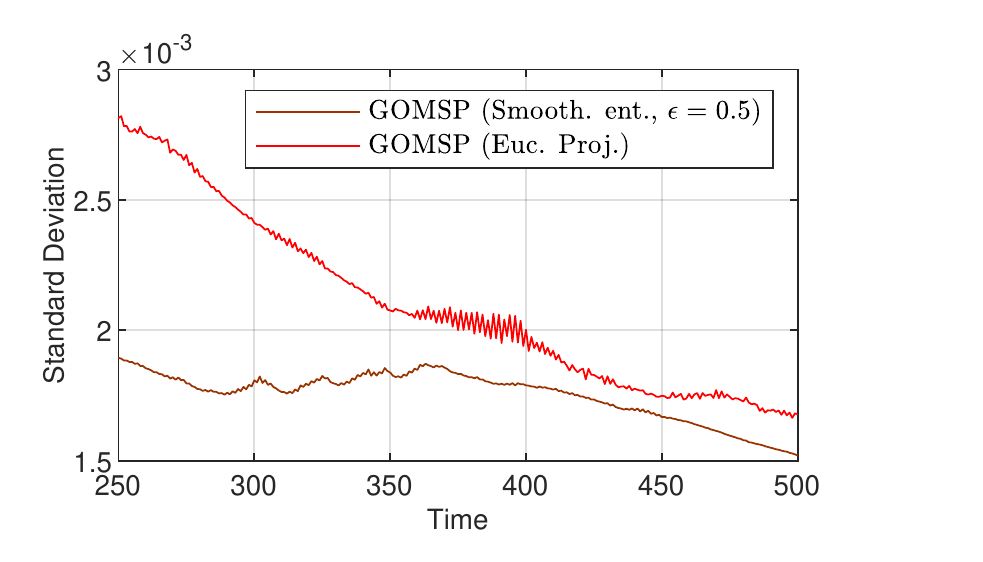}
	\end{center}
	\caption{Standard Deviation of GOSMP
	}
	\label{Fig:aoaojsjsjjddd6}
\end{figure}
\paragraph{Impact of Mirror Map Choice}
At last, we are interested in investigating to what extent does the choice of the mirror map impacts the performance of GOSMP. Toward this end, we perform GOSMP with Euclidean projection and the smoothed entropy with $\epsilon=0.5$ as regularizers. In both cases, we choose $\gamma=0.1/\sqrt{T}$, $\alpha=0.1/\sqrt{T}$, and $h=[\cdot]_{+}$. We simulate both instances of GOSMP with $200$ gradient noise samples. Figure \ref{Fig:aoaojsjsjjddd4} depicts the dynamic regret of our simulation. There the thick line corresponds to the sample average of the trajectories, and the shaded line specifies the area where $25\%$, $50\%$, $75\%$, and $90\%$ of the samples are. A clear trend which we can observe is that the TADR of GOSMP with smoothed entropy as regularizer is significantly lower than the TADR of GOSMP with Euclidean projection as regularizer. We believe that this effect aligns with the discussion made in paragraph $c)$ in Section V. Moreover we observe that the TADR of GOSMP with smoothed entropy as regularizer is more volatile than that of GOSMP with Euclidean projection as regularizer. This observations confirms the hypothesis that using a mirror map other than the Euclidean one results in more robust algorithm behavior. We also observe similar trends in the resource-aware behavior of GOSMP (see Figure 	\ref{Fig:aoaojsjsjjddd5} and \ref{Fig:aoaojsjsjjddd6}). However, the effect of noise reduction is less pronounced comparing to that of TADR. 
\appendix
\subsection{Missing Proofs in Section \ref{Sec:Perf}}
\label{Subsec: AppProof}
The proof of Lemma 	\ref{Lem:aaiisshdggdhgdhsssss} is straightforward following \cite{Zinkevich2003}:
\begin{proof}[Proof of Lemma 	\ref{Lem:aaiisshdggdhgdhsssss}]
	By inserting the primal iterate of the GOMSP into the bound given in Proposition \ref{Prop:aaisshhfjffjfjfff}, by using triangle inequality, by the inequality $(\sum_{i=1}^{K}a_{i})^{2}\leq K\sum_{i=1}^{K}a_{i}^{2}$, it is straightforward to obtain:
	\begin{align*}
	&\Delta\mathcal{E}^{1}_{t}(x)
	\leq-\gamma\inn{X_{t}-x}{\nabla f_{t}(X_{t})+[\nabla (h\circ g_{t})(X_{t})]^{T}\Lambda_{t}}\\&+\gamma\tilde{\xi}_{t+1}+\frac{1}{K}\left(\gamma^{2}C_{1}^{2}\norm{\Lambda_{k}}_{2}^{2}+2\gamma^{2}(C_{2}^{2}+\norm{\xi_{t+1}}_{*}^{2})\right)
	\end{align*}
\end{proof}

Our aim now is to proof Lemma \ref{Lem:aaiisshdggdhgdhsssss2}. It is an immediate consequence of the following auxiliary statements: 
\begin{lemma}
	\label{Lem:aiashshsgdgdssdd}
	It holds:
	\begin{align*}
	\Delta\mathcal{E}^{2}_{t}
	\leq \gamma\inn{\Lambda_{t}}{h(g_{t}(X_{t}))}-(\alpha\gamma-\alpha^{2}\gamma^{2})\norm{\Lambda_{t}}^{2}_{2}+\gamma^2 C_{3}^{2},
	\end{align*}
	where $C_{3}>0$ is a constant satisfying \eqref{Eq:asjsjskdhdggddfsssddd}.
\end{lemma}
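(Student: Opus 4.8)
The plan is to unfold $\Delta\mathcal{E}^2_t$ directly from the dual update rule \eqref{Eq:Dual} and to exploit the non-expansiveness of the Euclidean projection. Reading $\mathcal{E}^2_t=\norm{\Lambda_t}_2^2/2$ (which is the form consistent with the telescoping identity used in the proof of Theorem \ref{Thm:aapsjfgezzegfddd}), I would first record that $\Delta\mathcal{E}^2_t=\tfrac{1}{2}\left(\norm{\Lambda_{t+1}}_2^2-\norm{\Lambda_t}_2^2\right)$. Since $\real^R_{\geq 0}$ is a closed convex cone containing the origin, so that $\Pi_{\real^R_{\geq 0}}[0]=0$, the $1$-Lipschitz property of the projection yields $\norm{\Lambda_{t+1}}_2=\norm{\Pi_{\real^R_{\geq 0}}[(1-\alpha\gamma)\Lambda_t+\gamma h(g_t(X_t))]}_2\leq\norm{(1-\alpha\gamma)\Lambda_t+\gamma h(g_t(X_t))}_2$, which eliminates the projection from all subsequent algebra.

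Next I would expand the squared norm on the right-hand side as $(1-\alpha\gamma)^2\norm{\Lambda_t}_2^2+2(1-\alpha\gamma)\gamma\inn{\Lambda_t}{h(g_t(X_t))}+\gamma^2\norm{h(g_t(X_t))}_2^2$ and invoke \eqref{Eq:asjsjskdhdggddfsssddd} to replace the last summand by $\gamma^2 C_3^2$. The cross term I would split as $2\gamma\inn{\Lambda_t}{h(g_t(X_t))}-2\alpha\gamma^2\inn{\Lambda_t}{h(g_t(X_t))}$: the first piece is exactly the term advertised in the statement, whereas the second has indefinite sign and must be controlled. The decisive step is to absorb this excess. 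Applying Cauchy--Schwarz and then the elementary inequality $2ab\leq a^2+b^2$ with $a=\alpha\gamma\norm{\Lambda_t}_2$ and $b=\gamma\norm{h(g_t(X_t))}_2$ gives $-2\alpha\gamma^2\inn{\Lambda_t}{h(g_t(X_t))}\leq\alpha^2\gamma^2\norm{\Lambda_t}_2^2+\gamma^2 C_3^2$, where I again use \eqref{Eq:asjsjskdhdggddfsssddd}. Combining this with $(1-\alpha\gamma)^2=1-2\alpha\gamma+\alpha^2\gamma^2$ collapses the coefficient of $\norm{\Lambda_t}_2^2$ to $1-2\alpha\gamma+2\alpha^2\gamma^2$ and produces a total constant of $2\gamma^2 C_3^2$; halving the resulting bound on $\norm{\Lambda_{t+1}}_2^2-\norm{\Lambda_t}_2^2$ reproduces precisely the coefficient $-(\alpha\gamma-\alpha^2\gamma^2)$ and the constant $\gamma^2 C_3^2$ in the claim.

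I expect the only genuine subtlety to be this treatment of the indefinite cross term: a naive expansion leaves the factor $2(1-\alpha\gamma)\gamma\inn{\Lambda_t}{h(g_t(X_t))}$, which is not of the advertised form, and one cannot simply bound $(1-\alpha\gamma)\leq 1$ because the inner product need not be non-negative (recall $h$ is only assumed monotone, so the vector $h(g_t(X_t))$ may have negative entries). The Young-type splitting above is exactly what makes the coefficient $\alpha^2\gamma^2$ appear rather than $\tfrac{1}{2}\alpha^2\gamma^2$, and what accounts for the second copy of $\gamma^2 C_3^2$; every remaining manipulation is routine.
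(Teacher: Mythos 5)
Your proof is correct and follows essentially the same route as the paper: drop the projection via non-expansiveness (using $0\in\real^{R}_{\geq 0}$), expand the squared norm, and invoke \eqref{Eq:asjsjskdhdggddfsssddd}. The only difference is cosmetic — the paper groups the update as $\Lambda_{t}+\gamma\bigl(h(g_{t}(X_{t}))-\alpha\Lambda_{t}\bigr)$ so the exact cross term appears directly and the $\alpha^{2}\gamma^{2}\norm{\Lambda_{t}}_{2}^{2}$ and second $\gamma^{2}C_{3}^{2}$ come from bounding $\gamma^{2}\norm{h(g_{t}(X_{t}))-\alpha\Lambda_{t}}_{2}^{2}\leq 2\gamma^{2}(C_{3}^{2}+\alpha^{2}\norm{\Lambda_{t}}_{2}^{2})$, whereas you obtain the same two surplus terms by a Young-type splitting of the indefinite cross term; both yield the identical bound.
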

\begin{proof}
	It holds:
	\begin{equation*}
	\begin{split}
	&\norm{\Lambda_{\tau+1}}^{2}_{2}=\norm{\Pi_{\real^{R}_{\geq 0}}\left[ (1-\alpha\gamma)\Lambda_{\tau}+\gamma h(g_{\tau}(X_{\tau}))\right] }^{2}_{2}\\
	&\leq\norm{\Lambda_{\tau}+\gamma h(g_{\tau}(X_{\tau}))-\alpha\gamma\Lambda_{\tau} }^{2}_{2}=\norm{\Lambda_{\tau}}^{2}\\
	&+2\left[\gamma  \inn{\Lambda_{\tau}}{h(g_{\tau}(X_{\tau}))}-\alpha\gamma\norm{\Lambda_{\tau}}^{2}\right] +\gamma^{2}\norm{h(g_{\tau}(X_{\tau}))-\alpha\Lambda_{\tau}}_{2}^{2},
	\end{split}
	\label{Eq:aaaisjjjshshhhdddffff}
	\end{equation*}
	where the inequality follows from the usual property of the Euclidean projection operator.
	Triangle inequality, the inequality $(a+b)^{2}\leq 2a+2b$, and \eqref{Eq:asjsjskdhdggddfsssddd} give $\norm{h(g_{\tau}(X_{\tau}))-\alpha\Lambda_{\tau}}_{2}^{2}\leq 2\left( C_{3}^{2}+\alpha^{2}\norm{\Lambda_{\tau}}_{2}^{2}\right)$. 
	So combining all the derived inequalities, we obtain Lemma Proof of Lemma \ref{Lem:aaiisshdggdhgdhsssss2}.
\end{proof}
\begin{lemma}
	\label{Lem:aiashshsgdgdssdd2}
	Suppose that $h$ is monotone and $g$ is convex. Let be $\tau$ fixed. It holds for any $\tilde{x}\in\mathcal{Q}_{\tau}$:
	\begin{align*}
	&\inn{\Lambda_{\tau}}{h(g_{\tau}(X_{\tau}))}\leq \inn{\left[ \nabla (h\circ g_{\tau})(X_{\tau})\right]^{\T} \Lambda_{\tau}}{X_{\tau}-\tilde{x} }.
	\end{align*}
\end{lemma}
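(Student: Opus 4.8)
The plan is to reduce the stated vector inequality to a family of one-dimensional subgradient inequalities, one per constraint index, and then recombine them using the nonnegativity of the dual variable. Writing both sides coordinate-wise, the left-hand side is $\inn{\Lambda_{\tau}}{h(g_{\tau}(X_{\tau}))}=\sum_{r=1}^{R}\Lambda_{\tau}^{r}\,h(g_{\tau}^{r}(X_{\tau}))$, while the matrix--vector product on the right expands, by the definition $[\nabla (h\circ g_{\tau})(X_{\tau})]^{\T}=[\nabla (h\circ g_{\tau}^{1})(X_{\tau}),\ldots,\nabla (h\circ g_{\tau}^{R})(X_{\tau})]$, into $[\nabla (h\circ g_{\tau})(X_{\tau})]^{\T}\Lambda_{\tau}=\sum_{r=1}^{R}\Lambda_{\tau}^{r}\,\nabla (h\circ g_{\tau}^{r})(X_{\tau})$, so that the right-hand side equals $\sum_{r=1}^{R}\Lambda_{\tau}^{r}\inn{\nabla (h\circ g_{\tau}^{r})(X_{\tau})}{X_{\tau}-\tilde{x}}$. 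Since $\Lambda_{\tau}\in\real_{\geq 0}^{R}$, it suffices to prove the scalar bound $h(g_{\tau}^{r}(X_{\tau}))\leq\inn{\nabla (h\circ g_{\tau}^{r})(X_{\tau})}{X_{\tau}-\tilde{x}}$ for each $r$, then multiply by $\Lambda_{\tau}^{r}\geq 0$ and sum over $r$.

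For the scalar bound I would exploit that $h\circ g_{\tau}^{r}$ is convex: $g_{\tau}^{r}$ is convex by hypothesis and $h$ is nondecreasing (and convex, as in the working examples $h=[\cdot]_{+}$ and $h=[\cdot]_{+}^{p}$), so the composition is convex, and the vector $\nabla (h\circ g_{\tau}^{r})(X_{\tau})=h'(g_{\tau}^{r}(X_{\tau}))\,\nabla g_{\tau}^{r}(X_{\tau})$ produced by the chain rule in the score update is a genuine subgradient of $h\circ g_{\tau}^{r}$ at $X_{\tau}$. The subgradient inequality at $X_{\tau}$, tested against $\tilde{x}$, then reads $h(g_{\tau}^{r}(\tilde{x}))\geq h(g_{\tau}^{r}(X_{\tau}))+\inn{\nabla (h\circ g_{\tau}^{r})(X_{\tau})}{\tilde{x}-X_{\tau}}$, which rearranges to $\inn{\nabla (h\circ g_{\tau}^{r})(X_{\tau})}{X_{\tau}-\tilde{x}}\geq h(g_{\tau}^{r}(X_{\tau}))-h(g_{\tau}^{r}(\tilde{x}))$.

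It then remains to discard the term $h(g_{\tau}^{r}(\tilde{x}))$, and here the feasibility of the comparator enters: because $\tilde{x}\in\mathcal{Q}_{\tau}$ we have $g_{\tau}^{r}(\tilde{x})\leq 0$, and since $h$ is monotonically increasing with $h(0)\leq 0$ (both examples satisfy $h(0)=0$), monotonicity gives $h(g_{\tau}^{r}(\tilde{x}))\leq h(0)\leq 0$. Dropping this nonpositive quantity yields the desired scalar inequality $h(g_{\tau}^{r}(X_{\tau}))\leq\inn{\nabla (h\circ g_{\tau}^{r})(X_{\tau})}{X_{\tau}-\tilde{x}}$, and summing the $\Lambda_{\tau}^{r}$-weighted terms completes the proof.

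I expect the only genuinely delicate point to be the bookkeeping around the composition: one must confirm that the object $\nabla (h\circ g_{\tau}^{r})(X_{\tau})$ appearing in \eqref{Eq:Score} is exactly the subgradient that validates the convexity inequality, and that the sign condition $h(0)\leq 0$ needed to annihilate $h(g_{\tau}^{r}(\tilde{x}))$ is consistent with the running assumptions on $h$. Everything else is the routine combination of the one-dimensional subgradient inequality, the nonnegativity of $\Lambda_{\tau}$, and the feasibility $g_{\tau}(\tilde{x})\leq 0$ encoded in $\tilde{x}\in\mathcal{Q}_{\tau}$.
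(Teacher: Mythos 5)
Your proof is correct and follows essentially the same route as the paper's: convexity of $h\circ g_{\tau}^{(r)}$ (which, as you rightly note, needs $h$ convex and nondecreasing, not merely monotone), the subgradient inequality tested at $\tilde{x}$, nonnegativity of $\Lambda_{\tau}$, and feasibility of $\tilde{x}$ to discard $h(g_{\tau}(\tilde{x}))$; the paper merely keeps the argument in vector form via $\inn{\Lambda_{\tau}}{\cdot}$ rather than summing coordinatewise. Your explicit flagging of the hypotheses $h$ convex and $h(0)\leq 0$ is in fact more careful than the paper, which attributes both steps to "monotonicity" alone.
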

\begin{proof}
	Let be $x\in\mathcal{X}$. Since $h$ is monotone and $g^{(r)}$ is convex for all $r\in [R]$, it follows that $h\circ g^{(r)}$ is convex. This and the fact that $\Lambda_{\tau}\geq 0$ for all $\tau$ gives:
	\begin{equation*}
	\begin{split}
	&\inn{\Lambda_{\tau}}{h(g(X_{\tau}))}\\
	&\leq\inn{\Lambda_{\tau}}{h(g_{\tau}(x))} -\inn{\Lambda_{\tau}}{\partial (h\circ g_{\tau})(X_{\tau})(x-X_{\tau}) }\\
	&=\inn{\Lambda_{\tau}}{h(g_{\tau}(x))} +\inn{\left[ \partial (h\circ g_{\tau})(X_{\tau})\right]^{\T} \Lambda_{\tau}}{X_{\tau}-x }
	\end{split}
	\end{equation*}	
	Since $h$ is monotone, we have for $\tilde{x}\in\mathcal{Q}_{\tau}\subset\mathcal{X}$, $h(g_{\tau}(\tilde{x}))\leq 0$. Consequently since $\Lambda_{\tau}\geq 0$, it yields $\inn{\Lambda_{\tau}}{h(g_{\tau}(\tilde{x}))}\leq 0$. Combining all the computations, we obtain the desired result.
\end{proof}
Consequently by combining Lemmas \ref{Lem:aiashshsgdgdssdd} and \ref{Lem:aiashshsgdgdssdd2} we obtain Lemma \ref{Lem:aaiisshdggdhgdhsssss2}.
\subsection{Properties of Mirror Map and Fenchel coupling}
The following Proposition which is a folklore in convex analysis gives some basic properties of the mirror map:
\begin{proposition}
	\label{Prop:aiaishshjfggfhdhddd}
	Let $\psi$ be a $K$-strongly convex regularizer on a compact convex subset $\mathcal{Z}$ of a Euclidean normed space $\mathcal{V}$ inducing the mirror map $\Phi:\V^{*}\rightarrow\Z$, and let $\psi^{*}:\V^{*}\rightarrow\real$, $y\mapsto\max_{x\in\Z}\left\{\left\langle x,y\right\rangle-\psi(x)\right\}$ be the convex conjugate of $\psi$. Then:
	\begin{enumerate}
		\item $x=\Phi(y)$ if and only if $y\in\partial \psi(x)$. In particular $\text{im}(\Phi)=\text{dom}(\partial \psi)\supseteq \text{relint}(\Z)$.
		\item $\psi^{*}$ is differentiable on $\V^{*}$ and $\nabla \psi^{*}(y)=\Phi(y)$.
		\item $\Phi$ is $(1/K)$-Lipschitz continuous.
		\item $\psi$ is $1/\norm{\mathcal{Z}}_{*}$-strongly convex w.r.t. $\norm{\cdot}$.
	\end{enumerate}
\end{proposition}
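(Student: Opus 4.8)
The statement is classical convex-analytic folklore, and the plan is to derive all four items from the Fenchel--Young machinery attached to the conjugate pair $(\psi,\psi^{*})$, invoking $K$-strong convexity precisely where uniqueness and quantitative control are needed. Throughout I would first extend $\psi$ to all of $\V$ by setting $\psi\equiv+\infty$ outside $\Z$, so that $\psi$ becomes a proper, closed, convex function whose conjugate is exactly the $\psi^{*}$ in the statement; this lets the constraint $x\in\Z$ be absorbed into $\partial\psi$ via the normal cone and keeps the optimality conditions unconditional.

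For item 1, I would write the optimality condition for the concave problem defining $\Phi(y)=\argmax_{x\in\Z}\{\inn{y}{x}-\psi(x)\}$: a point $x$ attains the maximum iff $0\in\partial[\psi(x)-\inn{y}{x}]=\partial\psi(x)-y$, i.e. iff $y\in\partial\psi(x)$, and strong convexity makes this maximizer unique, giving $x=\Phi(y)\iff y\in\partial\psi(x)$. The ``in particular'' clause is then immediate: $x\in\text{im}(\Phi)$ iff some $y$ satisfies $y\in\partial\psi(x)$, which is precisely $x\in\text{dom}(\partial\psi)$; and the inclusion $\text{relint}(\Z)\subseteq\text{dom}(\partial\psi)$ is the standard fact that a finite convex function is subdifferentiable throughout the relative interior of its domain.

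Item 2 I would read off from item 1 together with conjugate--subgradient duality, $x\in\partial\psi^{*}(y)\iff y\in\partial\psi(x)$: this identifies $\partial\psi^{*}(y)$ with the solution set of the maximization, which by strong convexity is the singleton $\{\Phi(y)\}$. Since $\psi^{*}$ is finite (as $\Z$ is compact) and convex on all of $\V^{*}$, a single-valued subdifferential forces differentiability, so $\nabla\psi^{*}(y)=\Phi(y)$ everywhere (equivalently, one invokes Danskin's/the envelope theorem). For item 3, I would set $x_{i}=\Phi(y_{i})$, so $y_{i}\in\partial\psi(x_{i})$, and add the two strong-convexity subgradient inequalities to obtain the monotonicity estimate $\inn{y_{1}-y_{2}}{x_{1}-x_{2}}\geq K\norm{x_{1}-x_{2}}^{2}$; bounding the left-hand side by $\norm{y_{1}-y_{2}}_{*}\norm{x_{1}-x_{2}}$ via H\"older and dividing yields $\norm{\Phi(y_{1})-\Phi(y_{2})}\leq\tfrac{1}{K}\norm{y_{1}-y_{2}}_{*}$.

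The genuinely delicate item is 4, since it pins the modulus to the geometry of $\Z$ through $\norm{\Z}_{*}$ rather than to an a priori constant. The plan is to estimate the Bregman (second-order) increment of $\psi$ from below in the $\norm{\cdot}$-geometry, realizing the constant $1/\norm{\Z}_{*}$ from a Cauchy--Schwarz/Pinsker-type pairing of the curvature against the dual-norm size of points of $\Z$ (this is exactly what produces $K_{\psi_{\epsilon}}=1/B$ for the smoothed entropy on $\{x\geq0:\sum_{i}x_{i}\leq B\}$, where $\norm{\Z}_{*}=B$). Here lies the main obstacle: one must make the normalization explicit so that the pairing is \emph{tight} and the modulus comes out as exactly $1/\norm{\Z}_{*}$ with no dimension-dependent loss, and one must be careful about which norm is dual to which, since a naive curvature lower bound would not reproduce the stated constant for an arbitrary regularizer.
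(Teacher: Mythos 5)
Your treatment of items 1)--3) is correct and essentially self-contained: the optimality condition $0\in\partial\psi(x)-y$ characterizing the maximizer, the conjugate--subgradient duality $x\in\partial\psi^{*}(y)\Leftrightarrow y\in\partial\psi(x)$ combined with single-valuedness forcing differentiability of the finite convex function $\psi^{*}$, and the strong-monotonicity/H\"older argument for the $(1/K)$-Lipschitz bound are the standard proofs. The paper simply cites Rockafellar for these three items, so on 1)--3) you supply more detail than the source does.

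Item 4) is where your proposal has a genuine gap. You announce a plan to lower-bound the Bregman increment of $\psi$ via ``a Cauchy--Schwarz/Pinsker-type pairing of the curvature against the dual-norm size of points of $\Z$,'' and then explicitly flag that making this pairing tight with the correct norm duality is ``the main obstacle''---without resolving it. That is a statement of the difficulty, not a proof. The missing idea is that item 4) is not obtained by any primal-side curvature estimate at all, but by duality: the paper observes that $\nabla\psi^{*}=\Phi$ (item 2) takes values in the compact set $\Z$, hence $\norm{\nabla\psi^{*}(y)}\leq\norm{\Z}$ uniformly in $y$; it concludes that $\psi^{*}$ is $\norm{\Z}_{*}$-strongly smooth and then invokes the strong-convexity/strong-smoothness duality theorem (Theorem 3 of Kakade--Shalev-Shwartz--Tewari) to transfer this back to $1/\norm{\Z}_{*}$-strong convexity of $\psi$ with respect to $\norm{\cdot}$. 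Your proposal never identifies this dual route, and a direct curvature argument on $\psi$ cannot produce the stated constant, because $1/\norm{\Z}_{*}$ depends only on the geometry of $\Z$ and not on $\psi$, whereas any honest lower bound on the Bregman increment of an arbitrary regularizer must involve $\psi$ itself. (If you pursue the dual route, note that even the paper's step ``bounded gradient $\Rightarrow$ strongly smooth'' needs justification, since boundedness of $\nabla\psi^{*}$ alone yields only a linear, not quadratic, upper bound on the Bregman increment of $\psi^{*}$; closing item 4 rigorously requires more than the one-line deduction given there.)
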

\begin{proof}
For a proof of 1)-3), see e.g. Theorem 23.5 in \cite{Rockafellar1970} and Theorem 12.60(b) in \cite{Rockafellar1998}.

For the statement 4), notice that $\norm{\nabla\psi^{*}(y)}_{*}=\norm{\Phi(y)}_{*}\leq \norm{\mathcal{Z}}_{*}$ where the inequality follows from the fact that $\Phi$ is a mapping to $\mathcal{Z}$. Therefore $\psi^{*}$ is $\norm{\Z}_{*}$-strongly smooth and Strong/smooth duality Theorem (see e.g. Theorem 3 in \cite{Kakade2012}) asserts the desired statement.
\end{proof} 
Some useful properties of the Fenchel coupling is stated in the following (for proof see \cite{Mertikopoulos2016}):
\begin{proposition}
	\label{Prop:aaisshhfjffjfjfff}
	Let $F$ be the Fenchel coupling induced by a $K$-strongly convex regularizer on a compact convex subset $\mathcal{Z}$ of a Euclidean normed space $\V$. For $p\in\mathcal{
		Z}$, $y,y^{'}\in \V^{*}$, we have:
	\begin{enumerate}
		\item $F(p,y)\geq (K/2)\norm{\Phi(y)-p}^{2}$
		\item $F(p,y^{'})\leq F(p,y)+\inn{\Phi(y)-p}{y^{'}-y}+(1/2K)\norm{y^{'}-y}^{2}_{*}$
	\end{enumerate}
\end{proposition}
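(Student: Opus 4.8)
The plan is to derive both inequalities directly from the definition $F(p,y)=\psi(p)+\psi^{*}(y)-\inn{y}{p}$ together with the characterization of the mirror map supplied by Proposition \ref{Prop:aiaishshjfggfhdhddd}. The whole argument is essentially a translation of the primal $K$-strong convexity of $\psi$ and the dual $(1/K)$-smoothness of $\psi^{*}$ into statements about $F$.

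For the first claim, I would first use that $\Phi(y)$ attains the maximum in the definition of $\psi^{*}$, so that $\psi^{*}(y)=\inn{y}{\Phi(y)}-\psi(\Phi(y))$. Substituting this into $F$ and collecting terms rewrites the Fenchel coupling as the Bregman-type expression
\[
F(p,y)=\psi(p)-\psi(\Phi(y))-\inn{y}{p-\Phi(y)}.
\]
By part~1 of Proposition \ref{Prop:aiaishshjfggfhdhddd} we have $y\in\partial\psi(\Phi(y))$, so the $K$-strong convexity of $\psi$ gives $\psi(p)\geq\psi(\Phi(y))+\inn{y}{p-\Phi(y)}+(K/2)\norm{p-\Phi(y)}^{2}$. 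Rearranging this single inequality yields exactly $F(p,y)\geq(K/2)\norm{\Phi(y)-p}^{2}$.

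For the second claim, I would compute the difference directly: since the $\psi(p)$ terms cancel,
\[
F(p,y^{'})-F(p,y)=\psi^{*}(y^{'})-\psi^{*}(y)-\inn{y^{'}-y}{p}.
\]
The key step is to control $\psi^{*}(y^{'})-\psi^{*}(y)$. By parts~2 and~3 of Proposition \ref{Prop:aiaishshjfggfhdhddd}, $\psi^{*}$ is differentiable with $\nabla\psi^{*}=\Phi$, and $\Phi$ is $(1/K)$-Lipschitz; hence $\psi^{*}$ is $(1/K)$-smooth, and the descent lemma gives $\psi^{*}(y^{'})\leq\psi^{*}(y)+\inn{\Phi(y)}{y^{'}-y}+(1/2K)\norm{y^{'}-y}_{*}^{2}$. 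Substituting this bound and regrouping the linear terms as $\inn{\Phi(y)-p}{y^{'}-y}$ produces the claimed inequality.

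The main obstacle is the smoothness inequality for $\psi^{*}$: one must pass from the Lipschitz continuity of the gradient $\Phi=\nabla\psi^{*}$ to the quadratic upper bound, which is the content of the descent lemma, equivalently of the strong/smooth duality already invoked in the proof of Proposition \ref{Prop:aiaishshjfggfhdhddd}, part~4. Once that tool is in hand, the remaining manipulations in both parts are purely algebraic rearrangements of the definition of $F$, so no further subtlety is expected.
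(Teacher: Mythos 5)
Your proof is correct. The paper does not prove this proposition itself but defers to \cite{Mertikopoulos2016}, and your argument is exactly the standard one used there: rewriting $F(p,y)$ as the Bregman-type quantity $\psi(p)-\psi(\Phi(y))-\inn{y}{p-\Phi(y)}$ with $y\in\partial\psi(\Phi(y))$ and invoking $K$-strong convexity for the first part, and using the $(1/K)$-smoothness of $\psi^{*}$ (via $\nabla\psi^{*}=\Phi$ being $(1/K)$-Lipschitz, i.e.\ strong/smooth duality) together with the cancellation of the $\psi(p)$ terms for the second.
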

\bibliographystyle{IEEEtran}

\begin{thebibliography}{10}
	\providecommand{\url}[1]{#1}
	\csname url@samestyle\endcsname
	\providecommand{\newblock}{\relax}
	\providecommand{\bibinfo}[2]{#2}
	\providecommand{\BIBentrySTDinterwordspacing}{\spaceskip=0pt\relax}
	\providecommand{\BIBentryALTinterwordstretchfactor}{4}
	\providecommand{\BIBentryALTinterwordspacing}{\spaceskip=\fontdimen2\font plus
		\BIBentryALTinterwordstretchfactor\fontdimen3\font minus
		\fontdimen4\font\relax}
	\providecommand{\BIBforeignlanguage}[2]{{%
			\expandafter\ifx\csname l@#1\endcsname\relax
			\typeout{** WARNING: IEEEtran.bst: No hyphenation pattern has been}%
			\typeout{** loaded for the language `#1'. Using the pattern for}%
			\typeout{** the default language instead.}%
			\else
			\language=\csname l@#1\endcsname
			\fi
			#2}}
	\providecommand{\BIBdecl}{\relax}
	\BIBdecl
	
	\bibitem{Zinkevich2003}
	M.~Zinkevich, ``Online {C}onvex {P}rogramming and {G}eneralized {I}nfinitesimal
	{G}radient {A}scent,'' in \emph{Proceedings of the 20th International
		Conference on International Conference on Machine Learning}, 2003, pp. 928 --
	935.
	
	\bibitem{Narayanaswamy2012}
	B.~{Narayanaswamy}, V.~K. {Garg}, and T.~S. {Jayram}, ``Online optimization for
	the smart (micro) grid,'' in \emph{2012 Third International Conference on
		Future Systems: Where Energy, Computing and Communication Meet (e-Energy)},
	2012, pp. 1--10.
	
	\bibitem{Moeini2014}
	M.~{Moeini-Aghtaie}, P.~{Dehghanian}, M.~{Fotuhi-Firuzabad}, and
	A.~{Abbaspour}, ``{M}ultiagent {G}enetic {A}lgorithm: {A}n {O}nline
	{P}robabilistic {V}iew on {E}conomic {D}ispatch of {E}nergy {H}ubs
	{C}onstrained by {W}ind {A}vailability,'' \emph{IEEE Transactions on
		Sustainable Energy}, vol.~5, no.~2, pp. 699--708, Apr. 2014.
	
	\bibitem{Lin2011}
	M.~{Lin}, A.~{Wierman}, L.~L.~H. {Andrew}, and E.~{Thereska}, ``Dynamic
	right-sizing for power-proportional data centers,'' in \emph{2011 Proceedings
		IEEE INFOCOM}, Apr. 2011.
	
	\bibitem{Lin2012}
	M.~{Lin}, Z.~{Liu}, A.~{Wierman}, and L.~L.~H. {Andrew}, ``Online algorithms
	for geographical load balancing,'' in \emph{2012 International Green
		Computing Conference (IGCC)}, June 2012, pp. 1--10.
	
	\bibitem{Chen2017}
	T.~{Chen}, Q.~{Ling}, and G.~B. {Giannakis}, ``An {O}nline {C}onvex
	{O}ptimization {A}pproach to {P}roactive {N}etwork {R}esource {A}llocation,''
	\emph{IEEE Transactions on Signal Processing}, vol.~65, no.~24, pp.
	6350--6354, Dec. 2017.
	
	\bibitem{Gan2013}
	L.~Gan, A.~Wierman, U.~Topcu, N.~Chen, and S.~H. Low, ``{R}eal-time
	{D}eferrable {L}oad {C}ontrol: {H}andling the {U}ncertainties of {R}enewable
	{G}eneration,'' in \emph{Proceedings of the Fourth International Conference
		on Future Energy Systems}, 2013, pp. 113--124.
	
	\bibitem{Kim2014}
	S.~{Kim} and G.~B. {Giannakis}, ``Real-time electricity pricing for demand
	response using online convex optimization,'' Feb. 2014, pp. 1--5.
	
	\bibitem{Joseph2012}
	V.~{Joseph} and G.~{de Veciana}, ``Jointly optimizing multi-user rate
	adaptation for video transport over wireless systems:
	Mean-fairness-variability tradeoffs,'' in \emph{2012 Proceedings IEEE
		INFOCOM}, Mar. 2012.
	
	\bibitem{Zanini2010}
	F.~Zanini, D.~Atienza, G.~{De Micheli}, and S.~Boyd, ``Online convex
	optimization-based algorithm for thermal management of mpsocs,'' in
	\emph{Proc. of the 20th Great lakes symp. on VLSI}.\hskip 1em plus 0.5em
	minus 0.4em\relax ACM, 2010, pp. 203--208.
	
	\bibitem{Chen2018}
	T.~{Chen}, Q.~{Ling}, Y.~{Shen}, and G.~B. {Giannakis}, ``{H}eterogeneous
	{O}nline {L}earning for {Thing-Adaptive} {F}og {C}omputing in {IoT},''
	\emph{IEEE Internet of Things Journal}, vol.~5, no.~6, pp. 4328 -- 4341, Dec.
	2018.
	
	\bibitem{Chen2019}
	T.~{Chen} and G.~B. {Giannakis}, ``{B}andit {C}onvex {O}ptimization for
	{S}calable and {D}ynamic {IoT} management,'' \emph{IEEE Internet of Things
		Journal}, vol.~6, no.~1, pp. 1276--1286, Feb. 2019.
	
	\bibitem{Chen20192}
	T.~Chen, S.~Barbarossa, X.~Wang, G.~Giannakis, and Z.-L. Zhang, ``Learning and
	{M}anagement for {I}nternet of {T}hings: {A}ccounting for {A}daptivity and
	{S}calability,'' \emph{Proc. of the IEEE}, vol. 107, pp. 778--796, 2019.
	
	\bibitem{Mahdavi1}
	M.~Mahdavi, R.~Jin, and T.~Yang, ``Trading {R}egret for {E}fficiency: {O}nline
	{C}onvex {O}ptimization with {L}ong {T}erm {C}onstraints,'' \emph{J. Mach.
		Learn. Res.}, vol.~13, no.~1, pp. 2503 -- 2528, Jan. 2012.
	
	\bibitem{Yuan2018}
	J.~Yuan and A.~Lamperski, ``Online convex optimization for cumulative
	constraints,'' in \emph{Proc. of the 32nd Int. Conf on neu. Inf. Process.
		Sys.}, 2018, pp. 6140 -- 6149.
	
	\bibitem{Yu2016}
	H.~Yu and M.~J. Neely, ``A {L}ow {C}omplexity {A}lgorithm with
	{$\mathcal{O}(\sqrt{T})$} {R}egret and {F}inite {C}onstraint {V}iolations for
	{O}nline {C}onvex {O}ptimization with {L}ong {T}erm {C}onstraints,''
	\emph{arXiv:1604.02218}, 2016.
	
	\bibitem{Jenatton2016}
	R.~Jenatton, J.~C. Huang, and C.~Archambeau, ``Adaptive {A}lgorithms for
	{O}nline {C}onvex {O}ptimization with {L}ong-term {C}onstraints,'' in
	\emph{Proc. of the 33rd Int. Conf on Mach. Learn.}, vol.~48, 2016, pp. 402 --
	411.
	
	\bibitem{Yu2017}
	H.~Yu, M.~J. Neely, and X.~Wei, ``Online {C}onvex {O}ptimization with
	{S}tochastic {C}onstraints,'' in \emph{Proc. of the 31st Int. Conf. on Neur.
		Inf. Process. Sys.}, 2017, pp. 1427 -- 1437.
	
	\bibitem{Shalev-Shwartz2012}
	S.~Shalev-Shwartz, ``Online {L}earning and {O}nline {C}onvex {O}ptimization,''
	\emph{Foundations and Trends in Machine Learning}, vol.~4, no.~2, pp. 107 --
	194, 2012.
	
	\bibitem{Abernethy2008}
	J.~Abernethy and A.~Rakhlin, ``Optimal strategies and minimax lower bounds for
	online convex games,'' in \emph{Proc. of 19th COLT}, 2008.
	
	\bibitem{wood2014}
	A.~J. Wood, B.~F. Wollenberg, and G.~B. Shebl{\'e}, \emph{Power generation,
		operation, and control}.\hskip 1em plus 0.5em minus 0.4em\relax
	Wiley-Interscience, 2014.
	
	\bibitem{Kakade2012}
	\BIBentryALTinterwordspacing
	S.~M. Kakade, S.~Shalev-Shwartz, and A.~Tewari, ``Regularization techniques for
	learning with matrices,'' \emph{J. Mach. Learn. Res.}, vol.~13, no.~1, pp.
	1865--1890, 2012. [Online]. Available:
	\url{http://dl.acm.org/citation.cfm?id=2503308.2343703}
	\BIBentrySTDinterwordspacing
	
	\bibitem{Merti2017}
	P.~Mertikopoulos, E.~V. Belmega, R.~Negrel, and L.~Sanguinetti, ``Distributed
	stochastic optimization via matrix exponential learning,'' \emph{IEEE
		Transactions on Signal Processing}, vol.~65, no.~9, pp. 2277--2290, May 2017.
	
	\bibitem{Nemirovski2008}
	A.~Nemirovski, A.~Juditsky, G.~Lan, and A.~. Shapiro, ``Robust {S}tochastic
	{A}pproximation {A}pproach to {S}tochastic {P}rogramming,'' \emph{SIAM J. on
		Opt.}, vol.~19, no.~4, pp. 1574 -- 1609, Jan. 2008.
	
	\bibitem{Nesterov2009}
	Y.~Nesterov, ``Primal-dual subgradient methods for convex problems,''
	\emph{Math. Prog.}, vol. 120, no.~1, pp. 221--259, Aug 2009.
	
	\bibitem{Tassiulas1992}
	L.~{Tassiulas} and A.~{Ephremides}, ``Stability properties of constrained
	queueing systems and scheduling policies for maximum throughput in multihop
	radio networks,'' \emph{IEEE Transactions on Automatic Control}, vol.~37,
	no.~12, pp. 1936--1948, Dec. 1992.
	
	\bibitem{Neely2010}
	M.~Neely, \emph{Stochastic {N}etwork {O}ptimization with {A}pplication to
		{C}ommunication and {Q}ueueing {S}ystems}.\hskip 1em plus 0.5em minus
	0.4em\relax Morgan {\&} Claypool, 2010.
	
	\bibitem{Li2018}
	Y.~{Li}, G.~{Qu}, and N.~{Li}, ``Using predictions in online optimization with
	switching costs: A fast algorithm and a fundamental limit,'' in \emph{2018
		Annual American Control Conference (ACC)}, June 2018, pp. 3008--3013.
	
	\bibitem{Rockafellar1970}
	R.~T. Rockafellar, \emph{Convex Analysis}.\hskip 1em plus 0.5em minus
	0.4em\relax Princeton University Press, 1970.
	
	\bibitem{Rockafellar1998}
	R.~T. Rockafellar and R.~J.~B. Wets, \emph{Variational Analysis}, ser. A Ser.
	of Comp. Stud. in Math.\hskip 1em plus 0.5em minus 0.4em\relax
	Springer-Verlag, 1998, vol. 317.
	
	\bibitem{Mertikopoulos2016}
	P.~Mertikopoulos and W.~H. Sandholm., ``Learning in games via reinforcement and
	regularization,'' \emph{Math. of Op. Res.}, vol.~14, no.~1, pp. 124 -- 143,
	2016.
	
\end{thebibliography}

\end{document}